\def\rr{{\mathbb R}}
\def\rn{{{\rr}^n}}
\def\zz{{\mathbb Z}}
\def\nn{{\mathbb N}}
\def\fz{\infty}
\def\dist{{\mathop\mathrm{\,dist\,}}}
\def\loc{{\mathop\mathrm{\,loc\,}}}
\def\lz{\lambda}
\def\ez{\epsilon}
\def\bz{\beta}
\def\gz{{\gamma}}
\def\boz{{\Omega}}
\def\vz{\varphi}
\def\ls{\lesssim}
\def\diam{{\mathop\mathrm{\,diam\,}}}
\def\r{\right}
\def\lf{\left}
\newtheorem{thm}{Theorem}[section]
\newtheorem{lem}{Lemma}[section]
\newtheorem{rem}{Remark}[section]
\numberwithin{equation}{section}
\begin{document}
\arraycolsep=1pt

\title[Fractional Orlicz-Sobolev extension/imbedding on Ahlfors $n$-regular domains ]{
Fractional Orlicz-Sobolev extension/imbedding on Ahlfors $n$-regular domains }

\author{Tian Liang}
\address{ Department of Mathematics, Beihang University, Beijing 100191, P.R. China}
                    \email{ liangtian@buaa.edu.cn}

\thanks{     }

\date{\today }
\maketitle

\begin{center}
\begin{minipage}{13.5cm}\small{\noindent{\bf Abstract}\quad
In this paper we build up  a criteria for fractional Orlicz-Sobolev extension and imbedding domains on Ahlfors $n$-regular domains.

}
\end{minipage}
\end{center}

\section{Introduction\label{s1}}

 The study of extension/imbeddings of   function spaces (including Sobolev,   BMO,
 Besov and Triebel-Lizorkin spaces)  and their applications  in   harmonic analysis,  potential theory and
 partial differential equations  have attracted a lot attentions; see for example  \cite{jw78,j80,j81,jw84,s70,ds93,k98,r99,t02,t08,hkt08,s06,s07,s10,z14}.

  In this paper, we are interested in the fractional Orlicz-Sobolev extension/imbedding.
Let $\phi $  be a  Young function, that is, $\phi\in C([0,\fz))$ is  convex,
 $\phi(0)=0$ and $\phi(t)>0$ for $t>0$.
 For any $\bz>0$ and  domain  $\Omega\subset\rn $, define the fractional Orlicz-Sobolev spaces  $\dot{ W }^{\beta,\phi}(\Omega)$  as  the space  of
  all $u\in L^1_\loc(\Omega) $   whose (semi-)norm
\begin{eqnarray*}
\|u\|_{\dot{ W }^{\beta,\phi}(\Omega)} := \inf \left\{\lambda > 0 : \int_{\Omega} \int_{\Omega}  \phi \left(\frac{|u(x)-u(y)|}{\lambda}\right) \frac{dxdy}{|x-y|^{n+\beta}}\leq 1 \right\}
\end{eqnarray*}
is finite.   Modulo constant functions, $\dot{ W }^{\beta,\phi}(\Omega)$ is a Banach space.
 If  $ \phi(t)=t^p$ with $p\ge1 $,   then $\dot{ W }^{\beta,\phi}(\Omega)=\dot{W}^{\beta/p,p}(\Omega) $.
Here   $\dot{W}^{s,p}(\Omega) $ with $s>0$ and $p\ge1$ is the fractional Sobolev space, that is,  the collection of
 all   $u\in L^1_\loc(\Omega)$ with
  \begin{eqnarray*}
\|u\|_{\dot{ W }^{s,p}(\Omega)} :=   \lf(\int_{\Omega} \int_{\Omega}  \frac{|u(x)-u(y)|^p}{|x-y|^{n+sp}} \,dxdy\r)^{1/p}<\fz.
\end{eqnarray*}

To guarantee the nontriviality of $\dot{ W }^{\beta,\phi}(\Omega)$,  we always assume
\begin{equation}\label{delta0}C_\bz:=\sup_{t>0}
\frac{t^\beta}{\phi(t)}\int_0^t\frac{\phi( s  )}{ s^\beta}\frac{ds}{s }<\infty.
\end{equation}
Indeed, \eqref{delta0}
implies that  $C^\fz_c(\Omega)\subset
\dot{ W }^{\beta,\phi}(\Omega)$; see Lemma \ref{xx}. Moreover,  \eqref{delta0} is optimal to  guarantee the nontriviality of $\dot{ W }^{\beta,\phi}(\Omega)$ in the sense that
 if   $\phi(t)=t^p$ with $p \geq 1$,   then $\dot{ W }^{\beta,\phi}(\Omega)=\dot{W}^{\beta/p,p}(\Omega)$ is nontrivial
 if and only if $p>\beta$ (see \cite{gkz13}), and if and only if $C_\bz<\fz$.
Besides of $\phi(t)=t^p$ with $p \geq 1$ and $p > \beta$, we refer to Remark \ref{re.1.1} for more Young functions satisfying \eqref{delta0}, in particular, including
 $\phi(t)=t^p[\ln(1+t)]^\alpha$ with $p> \beta$, $p \geq 1$ and $\alpha\ge 1$.
We  remark that under  \eqref{delta0}, $\dot{ W }^{\beta,\phi}$ has fractional smoothness strictly less than 1.

The main purpose of this paper is to build up the following criteria  for
fractional Orlicz-Soblev $\dot{ W }^{\beta,\phi}$-extension and -imbedding domains 
 when $\bz\in (0,n)\cup(n,\fz)$, which generalize the corresponding results for
fractional Sobolev spaces (see \cite{jw78,jw84,s06,s07,z14}).
We also note that the case  $\bz=n$ has already been considered  in \cite{lz18}.

\begin{thm}\label{t1.2}
Let $\bz\in(0,n)\cup(n,\fz)$ and
$\phi$ be a Young function satisfying   \eqref{delta0}. Suppose that
  $\phi$ is doubling, that is, there exists a constant $K > 1$ such that $\phi(2t) \leq K \phi(t)$ for all $t>0$.
For any domain $\Omega\subset\rn$,  the following are equivalent:
\begin{enumerate}
 \item[(i)] $\Omega$ is Ahlfors $n$-regular, that is,    there exists a constant $\theta>0$ such that
$$
|B(x,r)\cap\Omega|\ge \theta r^n\quad\forall x\in\Omega, 0<r<2\diam\Omega.$$

\item[ (ii)] $\Omega$ is a ${\dot W }^{\beta,\phi}$-extension domain, that is,    any function
$u \in \dot{ W }^{\beta,\phi}(\Omega)$ can be extended to a function $\tilde{u}\in {\dot W }^{\beta,\phi}(\mathbb R^n)$  in a continuous and linear way.

 \item[(iii)] $\Omega$ is a ${\dot W }^{\beta,\phi}$-imbedding domain, that is,
 \begin{enumerate}
\item[(a)] when $0<\beta < n$, there exists a constant $C = C(\beta, n,\phi )>0$ such that 
\begin{align}\label{im1}
\inf_{c\in\rr}\|u-c\|_{L^{\phi^{n/(n-\beta)}}(\Omega)}\leq C \|u\|_{\dot{ W }^{\beta,\phi}(\Omega)} \quad\forall u \in \dot{W}^{\bz,\phi}(\Omega);
\end{align}

\item[(b)]  when $\beta > n$, there exists a constant $C= C(\beta, n,\phi ) >0$ such that for each $u \in \dot{W}^{\bz,\phi}(\Omega)$,
 we can find a function $\tilde{ u}$ with $  \tilde{u}=u$ almost everywhere and
\begin{align}\label{im2}
|\tilde{u}(x)- \tilde{u}(y)|  \leq   C  \phi^{-1}\left( |x-y|^{\beta-n}\right)\|u\|_{\dot{ W }^{\beta,\phi}(\Omega)}\quad\forall x,y\in\Omega.
\end{align}
\end{enumerate}
\end{enumerate}
\end{thm}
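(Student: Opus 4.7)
The plan is to prove the cycle (i) $\Rightarrow$ (ii) $\Rightarrow$ (iii) $\Rightarrow$ (i). The main technical weight sits in (i) $\Rightarrow$ (ii), where a Whitney-type extension must be constructed and controlled in the Orlicz-fractional seminorm; (ii) $\Rightarrow$ (iii) reduces to the corresponding imbedding on $\rn$, and (iii) $\Rightarrow$ (i) is a test-function argument.

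\textbf{Extension, (i) $\Rightarrow$ (ii).} Following the Jones/Shvartsman scheme, I would take a Whitney decomposition $\{Q_j\}$ of $\rn\setminus\overline{\Omega}$ and, using Ahlfors $n$-regularity, assign to each $Q_j$ a ``reflected'' ball $B_j\subset\Omega$ with $|B_j|\gs \ell(Q_j)^n$ and $\dist(B_j,Q_j)\ls \ell(Q_j)$. With $\{\psi_j\}$ a standard partition of unity subordinate to a mild expansion of $\{Q_j\}$, define
\begin{equation*}
Eu(x):=\begin{cases} u(x),& x\in\Omega,\\ \sum_j\psi_j(x)\, u_{B_j},& x\in\rn\setminus\overline{\Omega},\end{cases}
\end{equation*}
where $u_{B_j}$ is the integral mean of $u$ over $B_j$. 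The $\dot W^{\bz,\phi}(\rn)$-seminorm of $Eu$ splits into an interior integral (immediate from $u\in \dot W^{\bz,\phi}(\Omega)$), an exterior integral over $(\rn\setminus\overline\Omega)\times(\rn\setminus\overline\Omega)$, and a cross integral. For the latter two, $Eu(x)-Eu(y)$ is written as a telescoping sum of differences $u_{B_j}-u_{B_k}$ along a finite chain of Whitney-paired balls; Jensen's inequality and the doubling condition then reduce everything to the original integral $\int_\Omega\int_\Omega\phi(|u(z)-u(w)|/\lambda)|z-w|^{-n-\bz}\,dz\,dw$. This control is the main obstacle: the classical $L^p$ argument uses H\"older and summability of geometric series driven by $p>\bz$, and in the Orlicz setting those quantitative gains must be replaced by convexity estimates in which \eqref{delta0} provides precisely the summability margin that keeps the chain sums uniformly bounded.

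\textbf{Imbedding and necessity, (ii) $\Rightarrow$ (iii) $\Rightarrow$ (i).} Given the extension operator, \eqref{im1} and \eqref{im2} reduce to their counterparts on $\rn$. For $\bz<n$ I would combine the pointwise representation of $u-u_B$ as a Riesz-type integral with the doubling of $\phi$ and the constant $C_\bz$ from \eqref{delta0} to pass from $\dot W^{\bz,\phi}(\rn)$ to $L^{\phi^{n/(n-\bz)}}(\rn)$; for $\bz>n$ the same pointwise identity plus a dyadic telescoping sum over balls yields the modulus of continuity $\phi^{-1}(|x-y|^{\bz-n})$. For (iii) $\Rightarrow$ (i), fix $x_0\in\Omega$ and $r\in(0,2\diam\Omega)$, set $m:=|B(x_0,r)\cap\Omega|$, and suppose $m/r^n$ is arbitrarily small. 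In case (a) I would test \eqref{im1} against a smooth cutoff of $B(x_0,r/2)\cap\Omega$: bounding the fractional Orlicz-Sobolev seminorm from above in terms of $m$ and $\phi^{-1}(r^{-\bz})$, and the Orlicz norm on the left from below in terms of $m$ (again via \eqref{delta0}), forces $m\gs r^n$. In case (b) the modulus \eqref{im2} applied to two points separated across a thin neck of $\Omega\cap B(x_0,r)$, whose existence follows when $m\ll r^n$, fails against a cutoff of small fractional norm, again forcing $m\gs r^n$. Both arguments isolate Ahlfors $n$-regularity as the sharp scaling-compatible condition for the imbeddings.
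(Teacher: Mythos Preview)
Your overall cycle (i)$\Rightarrow$(ii)$\Rightarrow$(iii)$\Rightarrow$(i) and the Whitney/Shvartsman extension sketch for (i)$\Rightarrow$(ii) match the paper closely; the paper also splits the double integral into interior/cross/exterior pieces and controls each by Jensen plus the margin from \eqref{delta0} (no chains are actually needed, and doubling is not used in this step). The case $\bz>n$ of (ii)$\Rightarrow$(iii) via a dyadic telescoping over balls is also essentially what the paper does.

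Two places need attention.

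\emph{The imbedding for $0<\bz<n$.} Your plan to pass from $\dot W^{\bz,\phi}(\rn)$ to $L^{\phi^{n/(n-\bz)}}(\rn)$ via a ``Riesz-type integral representation of $u-u_B$'' is where the Orlicz setting bites. The paper explicitly notes that self-improvement/harmonic-analysis routes here become complicated, and instead gives a direct proof: a localized geometric inequality $\int_{B\setminus E}|x-y|^{-n-\bz}\,dy\gtrsim |E|^{-\bz/n}$ for $|E|\le\frac12|B|$, a decomposition of $u-m_u(B)$ into $u_\pm$ via the median, and a dyadic level-set computation (in the spirit of Di~Nezza--Palatucci--Valdinoci) together with H\"older in the index $\bz/n$ and the doubling of $\phi$. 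Your outline does not supply a substitute for this step, and the Riesz-potential argument does not obviously go through in the Luxemburg-norm framework without substantial extra work.

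\emph{(iii)$\Rightarrow$(i) for $0<\bz<n$.} Testing a single cutoff of $B(x_0,r/2)\cap\Omega$ only yields a relation of the form
\[
r^\bz\,|B_\Omega(x_0,r/2)|^{1-\bz/n}\ \lesssim\ |B_\Omega(x_0,r)|,
\]
which does not give $|B_\Omega(x_0,r)|\gtrsim r^n$ without already knowing a doubling-type comparison between $|B_\Omega(x_0,r/2)|$ and $|B_\Omega(x_0,r)|$. The paper (following Haj\l asz--Koskela--Tuominen) instead chooses radii $b_jt$ so that $|B_\Omega(x,b_jt)|=2^{-j}|B_\Omega(x,t)|$, applies the imbedding to the cut-offs $u_{x,b_{j+1}t,b_jt}$, obtains $(b_j-b_{j+1})t\lesssim 2^{-j/n}|B_\Omega(x,t)|^{1/n}$, and sums to get $b_1t\lesssim |B_\Omega(x,t)|^{1/n}$; a dichotomy on $b_1$ then finishes. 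Your proposal needs this iteration. For $\bz>n$ your ``thin neck'' idea is unnecessary: the paper simply evaluates \eqref{im2} with the explicit test function $u_{x,t/2,t}$ at $x$ and a point outside $B_\Omega(x,t)$, which already forces $t^n\lesssim |B_\Omega(x,t)|$ via the doubling of $\phi$.
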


Above  we denote by $L^\phi(\Omega)$  the Orlicz space, that is,
  the collection of all   $u\in L^1_\loc(\Omega)$ whose norm
 \begin{align*}
\|u\|_{ { L}^{\phi}(\Omega)} := \inf \left\{\lambda > 0 : \int_{\Omega} \phi \left(\frac{|u(x)|}{\lambda}\right) dx\leq 1 \right\}<\fz.
 \end{align*}

This paper is organized as follows.  In Section 2, we recall some properties of Young functions and show the nontriviality of $W^{\bz,\phi}$ under \eqref{delta0}.  The proofs of (i)$\Rightarrow$(ii), (ii)$\Rightarrow$(iii)
and  (iii)$\Rightarrow$(i) of Theorem \ref{t1.2} are given in Section 4, 3, 5 separately.


 Note that (i)$\Rightarrow$(ii) follows from the following Theorem \ref{t1.1}, where the doubling condition on $\phi$ is not needed.
Theorem \ref{t1.1} will be proved by an argument similar to the case $\bz=n$ as given in \cite{lz18},
but, due to some technical differences caused by $\bz\ne n$, we give the details in Section \ref{s4} for reader's convenience.
\begin{thm}\label{t1.1}
 Let  $\bz\in(0,n)\cup(n,\fz)$ and   $\phi$ be a Young function satisfying  \eqref{delta0}.
  If   $\Omega \subset \rn $ is  an Ahlfors $n$-regular domain,
then    $\Omega$ is a ${\dot W }^{\bz,\phi}$-extension domain.
\end{thm}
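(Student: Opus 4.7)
The strategy is a Whitney-type extension in the spirit of Jonsson--Wallin, carried out for $\bz=n$ in the fractional Orlicz setting in \cite{lz18}. My task is to redo the bookkeeping for a general $\bz\in(0,n)\cup(n,\fz)$, without invoking any doubling of $\phi$.

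\emph{Construction.} Ahlfors $n$-regularity, combined with the Lebesgue density theorem, forces $|\pa\boz|=0$: every $x\in\pa\boz$ has lower $\boz$-density bounded below (apply regularity to a nearby interior point), yet Lebesgue density at a.e.\ point of $\rn\setminus\boz$ vanishes. Let $\{Q_j\}_{j\in J}$ be the Whitney decomposition of $\boz_0:=\rn\setminus\overline{\boz}$, so $\ell(Q_j)\approx\dist(Q_j,\pa\boz)$. Attach to each $Q_j$ a point $x_j\in\pa\boz$ with $\dist(Q_j,x_j)\approx\ell(Q_j)$ and a reflected ball $B_j:=B(x_j,\kz\ell(Q_j))\cap\boz$; Ahlfors regularity gives $|B_j|\gs\ell(Q_j)^n$. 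With a Whitney partition of unity $\{\vz_j\}$ satisfying $\vz_j\in C_c^\fz(\tfrac{9}{8}Q_j)$, $\sum_j\vz_j\equiv1$ on $\boz_0$, and $\|\nabla\vz_j\|_\fz\ls\ell(Q_j)^{-1}$, define
\[
Eu(x):=\begin{cases}u(x),&x\in\boz,\\[2pt]\sum_{j}\vz_j(x)\,u_{B_j},&x\in\boz_0,\end{cases}\qquad u_{B_j}:=\frac{1}{|B_j|}\int_{B_j}u(z)\,dz.
\]
This operator is manifestly linear and, since $|\pa\boz|=0$, defined a.e.\ on $\rn$.

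\emph{Energy estimate.} Setting $\lz:=\|u\|_{\dot W^{\bz,\phi}(\boz)}$, split
\[
\int_{\rn}\!\!\int_{\rn}\phi\!\left(\frac{|Eu(x)-Eu(y)|}{M\lz}\right)\frac{dx\,dy}{|x-y|^{n+\bz}}
\]
into integrals over $\boz\times\boz$, $\boz\times\boz_0$ (and its symmetric), and $\boz_0\times\boz_0$. The $\boz\times\boz$-piece is $\le 1/M$ by convexity $\phi(t/M)\le\phi(t)/M$ for $M\ge 1$. For $\boz_0\times\boz_0$ with $x\in Q_j$, $y\in Q_k$, the partition-of-unity identity reduces $|Eu(x)-Eu(y)|$ to a finite sum of differences $|u_{B_i}-u_{B_{j_0}}|$ among balls linked by an Ahlfors-regular chain contained in a single ball $\wz B\subset\boz$ of radius $\approx|x-y|$ with $|\wz B|\approx|x-y|^n$; a standard chaining argument then yields
\[
|u_{B_i}-u_{B_{j_0}}|\ls\frac{1}{|\wz B|^2}\int_{\wz B}\!\!\int_{\wz B}|u(z)-u(w)|\,dz\,dw.
\]
Jensen's inequality applied to $\phi(\cdot/(M\lz))$ followed by Fubini, using $|z-w|\ls|x-y|$ and the comparability $|\wz B|\approx|x-y|^n$ to recover the right kernel, bounds this piece by $CM^{-1}\int_\boz\int_\boz\phi(|u(z)-u(w)|/\lz)|z-w|^{-n-\bz}dz\,dw\le CM^{-1}$. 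The mixed $\boz\times\boz_0$ pieces are handled analogously, comparing $u(x)$ to $u_{B_{j(y)}}$ through the same chain. Taking $M$ large enough absorbs the implied constants, giving $\|Eu\|_{\dot W^{\bz,\phi}(\rn)}\le M\lz$.

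\emph{Main obstacle.} The delicate point, and where the argument differs from the $\bz=n$ case of \cite{lz18}, is the geometric-series bookkeeping when the Whitney cubes of $x$ and $y$ have very different scales: the kernel $|x-y|^{-n-\bz}$ then distributes weights $\ell(Q_j)^{\pm(n-\bz)}$ along the chain, and one must verify the resulting sums converge uniformly for every $\bz\in(0,n)\cup(n,\fz)$, with constants depending only on $n$, $\bz$, and $\phi$. Since the doubling property of $\phi$ is not assumed, convexity must be used judiciously: Jensen can be applied only with positive weights summing to $1$, so the chain contributions have to be decomposed into convex combinations before $\phi$ is inserted. The dichotomy $\bz<n$ versus $\bz>n$ enters only through the sign of $\bz-n$ and is handled uniformly.
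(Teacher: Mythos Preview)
Your outline captures the Whitney--reflection construction and matches the paper's approach for the ``far'' regime in $\boz_0\times\boz_0$ (where $|x-y|\gs\ell(Q_j)$ and the reflected balls sit inside a common $\wz B$ of radius $\approx|x-y|$). But there is a genuine gap in the ``near-diagonal'' regime: when $x,y$ lie in the same or neighbouring Whitney cubes with $|x-y|\ll\ell(Q_j)$, your chain ball $\wz B$ \emph{cannot} have radius $\approx|x-y|$, since the reflected balls themselves have radius $\approx\ell(Q_j)\gg|x-y|$. Thus $|z-w|\ls|x-y|$ fails for $z,w\in\wz B$, and you cannot recover the kernel $|x-y|^{-n-\bz}$ from $|z-w|^{-n-\bz}$ as you claim. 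In this regime the paper instead uses the Lipschitz bound $|Eu(x)-Eu(y)|\ls(|x-y|/\ell(Q_j))\,|u_{Q^\ast}-u_{P^\ast}|$ and then must control
\[
\int_{|x-y|<\ell(Q_j)}\phi\!\lf(\frac{|x-y|}{\ell(Q_j)}\cdot\frac{A}{\lz}\r)\frac{dy}{|x-y|^{n+\bz}}
\ \ls\ \ell(Q_j)^{-\bz}\int_0^1\phi\!\lf(\frac{sA}{\lz}\r)\frac{ds}{s^{\bz+1}},
\]
and it is precisely hypothesis \eqref{delta0} that bounds the right-hand side by $C_\bz\,\ell(Q_j)^{-\bz}\phi(A/\lz)$. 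Your sketch never invokes \eqref{delta0} in the energy estimate---only convexity---so this step is missing, and without it the near-diagonal integral diverges. Incidentally, the obstacle you flag (Whitney cubes of very different scales) is the \emph{easier} case: disparate Whitney scales force $|x-y|\gs\max\{\ell(Q_j),\ell(Q_k)\}$ automatically, which lands you in the far regime where your argument does work.

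A secondary issue: when $\boz$ is bounded, Whitney cubes $Q_j$ far from $\boz$ have $\ell(Q_j)\gg\diam\boz$, so your reflected ball $B_j=B(x_j,\kz\ell(Q_j))\cap\boz$ no longer satisfies $|B_j|\approx\ell(Q_j)^n$ (it saturates at $|\boz|$). The paper handles this by truncating the Whitney family at scale $\approx\diam\boz$, setting $Q^\ast=\boz$ for the large cubes, and treating them by a separate tail estimate; your proposal does not address this.
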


 The proof of (ii)$\Rightarrow$(iii) is given in Section \ref{s3}.
 When $\bz>n$, we use a $(1,\phi)_\bz$-Poincar\'e inequality proved in Lemma \ref{l3.1}; see Section \ref{s3.1}.
 When $\bz\in(0,n)$, the proof relies on the following  $(\phi^{n/(n-\bz)}, \phi)_\bz$-Poincar\'e   inequality.
 \begin{thm}\label{l3.9}
 Let $\bz\in(0,n)$ and
$\phi$ be a Young function satisfying   \eqref{delta0}. Suppose that
  $\phi$ is doubling. Then there exists a constant $C=C(\bz,n,\phi)>0 $ such that
   $$
\inf_{c\in\rr}\|u-c\|_{L^{\phi^{n/(n-\beta)}}(B)} \leq C \|u\|_{ \dot W ^{\beta,\phi}(B)}\quad\forall u \in \dot W ^{\beta,\phi}(B)
$$
whenever $B$ is a ball of $\rn$ or $B=\rn$.
\end{thm}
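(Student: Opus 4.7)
By dilation and translation it suffices to treat a fixed ball $B\subset\rn$; the case $B=\rn$ follows by exhausting with concentric balls and monotone convergence. Set $\lambda:=\|u\|_{\dot{ W }^{\beta,\phi}(B)}$, so that
\[
\iint_{B\times B}\phi\!\left(\frac{|u(x)-u(y)|}{\lambda}\right)\frac{dxdy}{|x-y|^{n+\beta}}\le 1,
\]
and define $H(x):=\int_B \phi(|u(x)-u(y)|/\lambda)|x-y|^{-n-\beta}\,dy$, so that $\|H\|_{L^1(B)}\le 1$. Writing $u_B$ for the average of $u$ over $B$, the goal is to show $\int_B \phi^{n/(n-\beta)}(c|u(x)-u_B|/\lambda)\,dx\ls 1$.

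The core of the argument is a Hedberg-type pointwise estimate. For a.e.\ $x\in B$, use the telescoping identity $u(x)-u_B=\sum_{j\ge 0}(u_{B_{j+1}}-u_{B_j})$ along the dyadic chain $B_j:=B\cap B(x,2^{-j}r_B)$ (with $|B_j|\gtrsim(2^{-j}r_B)^n$ since $x\in B$). Jensen's inequality for the convex $\phi$ applied to the product probability measure on $B_j\times B_{j+1}$, combined with $|z-w|\ls 2^{-j}r_B$ for $z\in B_{j+1}$, $w\in B_j$ (used to artificially insert the kernel $|z-w|^{-n-\beta}$), yields
\[
\phi\!\left(\frac{|u_{B_{j+1}}-u_{B_j}|}{\lambda}\right)\ls (2^{-j}r_B)^{\beta}\,\frac{1}{|B_j|}\int_{B_j} H(z)\,dz.
\]
Applying $\phi^{-1}$, summing in $j$, and using the doubling of $\phi$ (which gives $\sum_j \phi^{-1}(A\cdot 2^{-j\beta})\ls\phi^{-1}(A)$, since doubling implies $\phi^{-1}(K^{-m}s)\le 2^{-m}\phi^{-1}(s)$) produces a pointwise bound of the form
\[
\frac{|u(x)-u_B|}{\lambda}\ls \phi^{-1}\!\bigl(r_B^\beta\, G(x)\bigr),
\]
where $G$ is a nonlinear operator built from $H$. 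Applying $\phi^{n/(n-\beta)}$ and integrating, together with an Orlicz Hardy-Littlewood-Sobolev estimate for the Riesz potential $I_\beta$ (which holds under doubling of $\phi$ and \eqref{delta0}), then delivers the desired Luxemburg bound.

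The principal obstacle is in refining the dyadic estimate beyond the crude choice $G=M_BH$ (the Hardy-Littlewood maximal function of $H$): this bound gives only weak-$L^{n/(n-\beta)}$ integrability starting from $\|H\|_{L^1}\le 1$, which is insufficient. One must extract a Riesz-potential bound $|u(x)-u_B|\ls\lambda\, I_\beta g(x)$ for some $g\in L^\phi(B)$ with $\|g\|_{L^\phi}\ls 1$, so that the sharp Orlicz HLS theorem $I_\beta:L^\phi\to L^{\phi^{n/(n-\beta)}}$ yields the Sobolev-conjugate target exactly. The doubling of $\phi$ is essential throughout to manipulate constants inside $\phi^{-1}$ and $\phi^{n/(n-\beta)}$, while \eqref{delta0} is what ensures the Hedberg optimization closes with the correct Young function (and corresponds, in the model case $\phi(t)=t^p$, to the classical condition $p>\beta$ for nontriviality of $\dot{W}^{\beta/p,p}$).
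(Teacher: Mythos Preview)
Your approach via Hedberg-type pointwise estimates and Riesz potentials is genuinely different from the paper's route, but as written it contains a real gap that you yourself flag without resolving.

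From the telescoping estimate you obtain
\[
\frac{|u(x)-u_B|}{\lambda}\lesssim \sum_{j\ge0}\phi^{-1}\!\Bigl((2^{-j}r_B)^\beta\,\fint_{B_j}H\Bigr),
\]
and the only global information on $H$ is $\|H\|_{L^1(B)}\le 1$. You correctly note that replacing the averages by $M_BH(x)$ gives only a weak-type bound, and then assert that ``one must extract a Riesz-potential bound $|u(x)-u_B|\lesssim\lambda\,I_\beta g(x)$ for some $g\in L^\phi(B)$ with $\|g\|_{L^\phi}\lesssim 1$.'' This is precisely the missing idea, and you supply none. There is no natural candidate for $g$: the function $H$ lies only in $L^1$, and the nonlinearity of $\phi^{-1}$ blocks any rewriting of the sum as $I_\beta$ acting on an $L^\phi$ function. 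In addition, the Orlicz Hardy--Littlewood--Sobolev statement $I_\beta\colon L^\phi\to L^{\phi^{n/(n-\beta)}}$ is invoked as a black box; the target $\phi^{n/(n-\beta)}$ does not match the optimal Orlicz--Sobolev conjugate in the standard literature, so this mapping property would itself require proof under your hypotheses.

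The paper avoids potentials entirely and argues directly on level sets, following Di~Nezza--Palatucci--Valdinoci. The key new ingredient is a \emph{local} geometric inequality (Lemma~\ref{l3.5}): for $x\in B$ and $E\subset B$ with $|E|<\tfrac12|B|$,
\[
\int_{B\setminus E}\frac{dy}{|x-y|^{n+\beta}}\ge C(n,\beta)\,|E|^{-\beta/n}.
\]
After replacing $u_B$ by the \emph{median} $m_u(B)$ and splitting $u-m_u(B)=u_+-u_-$ (so that $|\{u_+>0\}|\le\tfrac12|B|$), one sets $A_k=\{u_+>2^k\}$, $a_k=|A_k|$, and uses the geometric lemma to bound the energy from below by $T\gtrsim\sum_k\phi(2^k/\lambda)\,a_k^{-\beta/n}a_{k+1}$. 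A discrete H\"older inequality in the exponents $\beta/n$ and $1-\beta/n$, together with doubling of $\phi$ to shift indices, then yields
\[
\int_B\phi^{n/(n-\beta)}\!\Bigl(\frac{u_+}{4\lambda}\Bigr)dx\le\Bigl(\sum_k\phi\!\Bigl(\frac{2^k}{\lambda}\Bigr)a_k^{1-\beta/n}\Bigr)^{n/(n-\beta)}\lesssim T^{\,n/(n-\beta)}\le 1.
\]
No maximal function or Riesz potential enters. Finally, the case $B=\rn$ is not obtained by monotone convergence as you suggest (the constants $u_{B(0,R)}$ vary with $R$), but by a separate chaining argument showing that $\{u_{B(0,2^k)}\}_k$ is Cauchy, using the ball case and Lemma~\ref{l2.4}(iii).
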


The $(\phi^{n/(n-\bz)}, \phi)_\bz$-Poincar\'e  inequality is a self-improvement of the $(1, \phi)_\bz$-Poincar\'e  inequality(see Lemma \ref{l3.1}).
One may wish to prove Theorem \ref{l3.9} via some known self-improvement approach from harmonic analysis.
%
But since here  Orlicz norm and fractional derivative are involved, the proof would be very complicated.
Instead, as motivated by \cite{npv12}, by building up  a local version of the known  geometric inequality
$$
\int_{\rn\setminus E } \frac{dy}{|x-y|^{n+\beta}} \geq C(n,\beta)|E|^{-\beta/n} \quad \mbox{whenever $|E|<\fz$  },
$$
  using some ideas from \cite{npv12} and also the median value,  we give a  direct proof of Theorem \ref{l3.9}.
See Section 3.2 for details.
%

To prove (iii)$\Rightarrow$(i) of Theorem \ref{t1.2}, the imbedding  assumption allows us to
calculate the  $\|u\|_{{\dot {W}}^{\bz,\phi}(  \Omega)}$-norm of some test functions.
 Using these and the doubling property of $\phi$, and following the idea  from \cite{hkt08} (see also \cite{hkt08b,z14}), we conclude that $\Omega$ is Ahlfors $n$-regular.

Notation used in the following is standard.
Denote by  $C$ some constant which may vary from line to line  but is independent of the main parameters.
The constant $C(X,Y,....)$  depends only on the parameters $X$,  $Y$, $\cdots$;
while the constant  with subscripts would not change in different occurrences, like $C_1$.
The symbol $A \lesssim B$ means that $A \leqslant CB$. For any locally integrable function $u$ and measurable set $X$ with $|X|>0$, we denote by $u_X$ the average of $u$ on $X$, namely $u_X = \fint _{X} u \equiv \frac{1}{|X|} \int _{ X} u \, dx$. We use $d(x, E)$ to describe the Euclidean distance from $x$ to a set $E$.

\section{Preliminaries\label{s2}}

The following properties of Young functions are well-known, but for the convenience of the reader, we give the proof.
\begin{lem}\label{l2.4}
Let $\phi$ be a Young function.
 \begin{enumerate}

  \item[(i)]   Then $\phi$ is continuous, strictly increasing and $\lim_{t\to\fz}\phi(t)=\fz$.

 \item[(ii)]   The    inverse $\phi^{-1}$ of $\phi$ is continuous, strictly increasing,   $\phi^{-1}(0)=0 $ and $\lim_{t\to\fz}\phi^{-1}(t)=\fz$.
Moreover, $\phi^{-1}$ is concave; in particular, $\phi^{-1}(2x)\le 2\phi^{-1}(x)$ for all $x>0$.

 \item[(iii)]  If $\phi$ is doubling with some constants $K>1$, then $\phi^{-1}(tx)\le t^{1/(K-1)}\phi^{-1}(x)$ for all $x >0$ and $t \in [0, 1]$. 
 \end{enumerate}
\end{lem}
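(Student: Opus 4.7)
My plan is to handle (i), (ii), and (iii) in order, exploiting the convexity of $\phi$ and the hypothesis $\phi(0)=0$ throughout, with the doubling condition entering only in (iii).

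For (i), continuity is already part of the definition, so only strict monotonicity and $\phi(t)\to\infty$ as $t\to\infty$ need proof. I will use the standard convex-analysis observation that $\phi(s)/s$ is non-decreasing on $(0,\infty)$ whenever $\phi$ is convex with $\phi(0)=0$: combined with the hypothesis $\phi(t_0)>0$ for any $t_0>0$, this yields $\phi(t)\ge(t/t_0)\phi(t_0)$ for $t\ge t_0$, which delivers divergence at infinity and, via the same monotonicity of $\phi(s)/s$, strict increase on $(0,\infty)$.

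For (ii), the continuity, monotonicity, boundary value, and divergence of $\phi^{-1}$ are immediate from (i), since $\phi$ is then a continuous strictly increasing bijection of $[0,\infty)$ onto $[0,\infty)$. Concavity of $\phi^{-1}$ is the standard dual fact: given $a,b\ge 0$ and $\lambda\in[0,1]$, set $x=\phi^{-1}(a)$ and $y=\phi^{-1}(b)$, so that convexity of $\phi$ gives $\phi(\lambda x+(1-\lambda)y)\le\lambda a+(1-\lambda)b$, and then apply the (increasing) function $\phi^{-1}$ to both sides. The bound $\phi^{-1}(2x)\le 2\phi^{-1}(x)$ follows from concavity plus $\phi^{-1}(0)=0$ applied to the convex combination $x=\tfrac12(2x)+\tfrac12\cdot 0$.

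For (iii), the core step is the polynomial growth estimate
\[
\phi(\mu s)\le\mu^{K-1}\phi(s)\quad\text{for all }\mu\ge 1,\;s>0.
\]
Once this is in hand, setting $s=\phi^{-1}(y)$, applying $\phi^{-1}$, and reparametrising $\sigma=1/\mu\in(0,1]$ converts it directly into the required $\phi^{-1}(\sigma y)\le\sigma^{1/(K-1)}\phi^{-1}(y)$. To prove the growth estimate I will work with the right derivative $\phi'_+$, which exists everywhere and is non-decreasing by convexity. The supporting-line inequality $\phi(2s)\ge\phi(s)+s\phi'_+(s)$, combined with the doubling hypothesis $\phi(2s)\le K\phi(s)$, forces $s\phi'_+(s)\le(K-1)\phi(s)$; dividing by $s\phi(s)>0$ and integrating from $s$ to $\mu s$ (using that $\log\phi$ is absolutely continuous on compact subintervals of $(0,\infty)$ with a.e.\ derivative $\phi'_+/\phi$) yields the claim.

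The main obstacle is the differentiation step in (iii): since $\phi$ need not be $C^1$, I must use the right derivative $\phi'_+$ and invoke the absolute continuity of convex functions to justify the integration. These are standard facts about convex functions but deserve explicit mention rather than being left implicit.
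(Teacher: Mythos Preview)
Your proposal is correct and follows essentially the same line as the paper's proof: both use convexity for (i) and (ii), and for (iii) both derive the differential inequality $s\phi'(s)\le(K-1)\phi(s)$ from the doubling hypothesis combined with $\phi(2s)-\phi(s)\ge s\phi'(s)$, then integrate $(\ln\phi)'$ to obtain the polynomial growth bound for $\phi$. There are two small tactical differences worth noting. In (i) you obtain strict monotonicity from the non-decrease of $\phi(s)/s$ (so that $\phi(t)\ge(t/s)\phi(s)>\phi(s)$ for $t>s>0$), whereas the paper argues by contradiction via the a.e.\ derivative; your route is slightly cleaner. In (iii) the paper, after establishing $t^{K-1}\phi(x)\le\phi(tx)$, runs a \emph{second} differentiation/integration argument for $\phi^{-1}$ (differentiating $\phi^{-1}\circ\phi=\mathrm{id}$ to get $(\ln\phi^{-1})'(s)\ge 1/((K-1)s)$ and integrating), whereas you simply invert the growth bound for $\phi$ by substitution. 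Your approach avoids the somewhat delicate differentiation of $\phi^{-1}$ and is more economical; just be aware that your ``reparametrising $\sigma=1/\mu$'' is shorthand for two substitutions (first $x=\mu^{K-1}y$, then $t=\sigma^{K-1}$), so spell this out when you write up the details.
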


\begin{proof} (i)  Since $\phi$ is  convex, we have $\phi(t)\ge t\phi(1)$ for all $t\ge1$ and hence $\lim_{t\to\fz}\phi(t)=\fz$.
  Also note that   $\phi'(t)\ge0$ for almost all $t\ge0$ and $\phi'$ is increasing. If $\phi$ is not strictly increasing, we must have $\phi(t)=\phi(t+s)$ for some $t\ge 0$ and $s>0$.  Thus
 $\phi'=0$ almost everywhere in $[t,t+s]$ and hence in $[0,t+s]$.
 This implies that $\phi=0$ in $[0,t+s]$, which contradicts with $\phi>0$ in $(0,\fz)$.

(ii) From (i) we see easily that the inverse $\phi^{-1}$  is continuous, strictly increasing,   $\phi^{-1}(0)=0 $ and $\lim_{t\to\fz}\phi^{-1}(t)=\fz$.
Moreover,
for any $x,y \ge0$,
from the convexity of $\phi$ it follows that for any $\lz \in [0,1]$,
\begin{align*}
\phi^{-1}(\lz x +(1-\lz)y)& =\phi^{-1}(\lz\phi(\phi^{-1}(x))+(1-\lz)\phi(\phi^{-1}(y)))\\
&\geq \phi^{-1} (\phi (\lz \phi^{-1}(x)+(1-\lz)\phi^{-1}(y)))=  \lz \phi^{-1}(x)+(1-\lz)\phi^{-1}(y).
\end{align*}
Thus $\phi^{-1}$ is concave.
Furthermore, thanks to $\phi^{-1}(0)=0 $ and the concavity, we get $\phi^{-1}(2x)\le 2\phi^{-1}(x)$ for all $x>0$.

(iii) Since   $\phi'$ is increasing, we have
$$
\phi(2t)-\phi (t)= \int_t^{2t}\phi'(s)\,ds\ge\phi'(t)t\quad\forall t>0.$$
By the doubling property of $\phi$, we have
$\phi(2t)-\phi(t)\le (K-1)\phi(t)$
and hence,
$$(K-1)\phi(t)\ge \phi'(t)t,\ \mbox{that is,\ }\ (\ln\phi)'(t)=\frac{\phi'(t)}{\phi(t)}\le\frac{K-1}t\quad\mbox{for almost all $t>0$.}$$
Thus for $t\in(0,1]$ we have
 \begin{equation*}
\ln\left(\frac{\phi(x)}{\phi(tx) }\right)= \int_{tx}^{x} (\ln\phi)'(s) \,ds \leq \int_{tx}^{x} \frac{K-1}{s}\,ds = \ln \left(\frac{1}{t^{K-1}}\right),
\end{equation*}
which gives that
$$t^{K-1 }\phi(x)\le \phi(tx).$$

For any $t>0$, $\phi^{-1}(\phi(t))=t$ implies $(\phi^{-1})'(\phi(t))\phi'(t)=1$ for almost all $t>0$,
 that is,
  $$(\phi^{-1})'(\phi(t))=\frac1{\phi'(t)} \ge \frac1{\phi(t)}\frac t{K-1}.$$
We  then have
$$(\phi^{-1})'(s)\ge  \frac1{s}\frac {\phi^{-1}(s)}{K-1},
\quad \mbox{that is,} \quad  (\ln \phi^{-1})'(s)\ge \frac1{s(K-1)}
 \quad \mbox{for almost all $s>0$}.$$
Thus for $t\in(0,1]$ we have
 \begin{equation*}
\ln\left(\frac{\phi^{-1}(x)}{\phi^{-1}(tx) }\right)= \int_{tx}^{x} (\ln\phi^{-1})'(s) \,ds \geq \int_{tx}^{x} \frac1{s(K-1)}\,ds = \ln \left(\frac{1}{t^{1/(K-1)}}\right),
\end{equation*} which gives that
$$
 t^{1/(K-1) }\phi^{-1}(x) \ge  \phi^{-1}(tx) $$
 as desired.
\end{proof}

\begin{lem}\label{xx} Let $\beta>0$ and $\phi$ be a Young  function satisfying \eqref{delta0}.
For any domain $\Omega\subset\rn$,  $C_c^1(\Omega)\subset \dot{W}^{\bz,\phi}(\Omega)$.
\end{lem}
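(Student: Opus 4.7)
Fix $u \in C_c^1(\Omega)$ and set $K := \overline{\mathrm{supp}\, u}$, which is compact in $\Omega$, together with $L := \|\nabla u\|_\fz$ and $M := \|u\|_\fz$. The plan is to show that for some sufficiently large $\lz>0$ we have
$$
I(\lz) := \int_\Omega\int_\Omega \phi\!\lf(\frac{|u(x)-u(y)|}{\lz}\r)\frac{dx\,dy}{|x-y|^{n+\bz}} \le 1,
$$
which gives $u\in \dot W^{\bz,\phi}(\Omega)$. Since $u$ vanishes off $K$, the integrand is zero when both $x,y\in\Omega\setminus K$; by symmetry we may control $I(\lz)$ by twice the integral over $\{x\in K, y\in\Omega\}$.

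The next step is to split this remaining integral according to $|x-y|\le 1$ and $|x-y|>1$. For the far part, the trivial bound $|u(x)-u(y)|\le 2M$ gives
$$
\int_K\int_{\{y\in\Omega:\,|x-y|>1\}} \phi\!\lf(\frac{2M}\lz\r)\frac{dy\,dx}{|x-y|^{n+\bz}} \le |K|\,\phi\!\lf(\frac{2M}\lz\r)\cdot \frac{n\oz_n}{\bz},
$$
which is finite and tends to $0$ as $\lz\to\fz$. For the near part, the Lipschitz estimate $|u(x)-u(y)|\le L|x-y|$, the monotonicity of $\phi$, and polar coordinates yield
$$
\int_K\int_{\{|x-y|\le 1\}} \phi\!\lf(\frac{L|x-y|}\lz\r)\frac{dy\,dx}{|x-y|^{n+\bz}} \le |K|\,n\oz_n \int_0^1 \phi\!\lf(\frac{Lr}\lz\r)\frac{dr}{r^{1+\bz}}.
$$
Changing variables $s=Lr/\lz$ converts the inner integral into $(L/\lz)^\bz\int_0^{L/\lz}\frac{\phi(s)}{s^\bz}\frac{ds}{s}$, and here I invoke the main hypothesis \eqref{delta0}: it bounds this quantity by $C_\bz\,\phi(L/\lz)$. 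Thus the near part is at most $|K|\,n\oz_n\, C_\bz\,\phi(L/\lz)$.

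Combining the two parts gives
$$
I(\lz) \le 2|K|\,n\oz_n\lf[C_\bz\,\phi(L/\lz) + \frac{1}{\bz}\phi(2M/\lz)\r].
$$
Since $\phi$ is continuous with $\phi(0)=0$, both $\phi(L/\lz)$ and $\phi(2M/\lz)$ tend to $0$ as $\lz\to\fz$; hence $I(\lz)\le 1$ for all sufficiently large $\lz$, proving that $u\in \dot W^{\bz,\phi}(\Omega)$. The only non-routine step is the near-diagonal estimate, and this is precisely what \eqref{delta0} was designed to handle, so no genuine obstacle arises beyond invoking that hypothesis carefully.
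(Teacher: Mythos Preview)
Your proof is correct and follows essentially the same approach as the paper's: both split the double integral into a near-diagonal part (handled via the Lipschitz bound and condition \eqref{delta0}) and a far part (handled via the decay of $|x-y|^{-n-\bz}$ and the boundedness of $u$). The only cosmetic difference is that the paper splits spatially using an intermediate compact set $V\Subset W\Subset\Omega$, whereas you split by the distance threshold $|x-y|\le 1$ versus $|x-y|>1$; the underlying mechanism is identical.
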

\begin{proof}
Given any $u \in C_c^1(\Omega)$, assume  $L=\| u\|_{L^\fz(\Omega)} +\|Du\|_{L^\fz(\Omega)}>0$ and choose a domain $W \subset \Omega$ such that $V={\rm\,supp\,}u\Subset W\Subset\Omega$. Then
\begin{align*}
H:=\int_\Omega\int_\Omega\phi\left(\frac{|u (z)-u (w)|}\lambda\right)\frac{dzdw}{|z-w|^{n+\beta}}
&\le  \int_W\int_{W}\phi\left(\frac{| z-w|}{\lambda/ L} \right)\frac{dzdw}{|z-w|^{n+\beta}} + 2
\int_{\Omega\setminus W}\int_{V}\phi\left(\frac  {L}{\lambda } \right)\frac{dzdw}{|z-w|^{n+\beta}}.
\end{align*}
By \eqref{delta0}, we have
\begin{align*}
 \int_W\int_W\phi\left(\frac{| z-w|}{\lambda/L} \right)\frac{dzdw}{|z-w|^{n+\beta}}&\le \int_V\int_{B(w,2|\diam W|)}\phi\left(\frac{| z-w|}{\lambda/L} \right)\frac{dz}{|z-w|^{n+\beta}}dw\\
&=n\omega_n\int_W\int_0^{2|\diam W|}\phi\left(\frac{t}{\lambda/L} \right)\frac{ dt}{t^{\beta+1}}\,dw\\
&=n\omega_n|W| (\frac{L} \lambda)^\beta \int_0^{2L|\diam W|/\lambda}\phi\left(s\right)\frac{ ds}{s^{\beta+1}} \\
&=n\omega_n C_{\beta} |W| 2^{-\beta}|\diam W|^{-\beta}\phi\left(\frac{2L|\diam W|}{\lambda } \right).
\end{align*}
Moreover,
\begin{align*}
2\int_{\Omega\setminus W}\int_{V}\phi\left(\frac  1{\lambda/L } \right)\frac{dzdw}{|z-w|^{n+\beta}}
&\le 2\phi\left(\frac  L{\lambda  } \right)\int_{V}\int_{\Omega\setminus B(z,\dist(V,W^\complement) )}\frac{dwdz}{|z-w|^{n+\beta}}
 \le 2\omega_n \phi\left(\frac  {L}{\lambda }\right)|V|\dist(V,W^\complement)^{-\beta}.
\end{align*}
Letting $\lambda$ large enough and using the convexity of $\phi$, we have $H\le1$.  That is, $u \in \dot{W}^{\bz,\phi}(\Omega)$ as desired.
\end{proof}

\begin{rem}\rm\label{re.1.1}
Let $\phi(t) = t^{p} \psi (t)$ be a Young function with $p \geq 1$. If $p > \beta$ and there exists a constant $C>0$ such that $\psi(s) \leq  C \psi(t)$ for all $s\leq t$, then $\phi$ satisfies  the  condition \eqref{delta0}, that is, $C_\beta < \infty$.
Indeed, for any $t> 0$,
\begin{align*}
\frac{t^\beta}{t^{p}\psi(t)}\int_0^t\frac{t^{p}\psi( s  )}{ s^\beta}\frac{ds}{s } \leq Ct^{\beta-p} \int_{0}^{t} s^{p-\beta-1} ds \leq \frac{C}{p-\beta}.
\end{align*}
Below are some  typical examples of Young functions $\phi$ satisfying \eqref{delta0}.

(i) $\phi(t)=t^p[\ln(1+t)]^\alpha$ with $p> \beta$, $p \geq 1$ and $\alpha\ge 1$.

(ii)$\phi(t)=\max\left\{t^p, t^{p+\delta}\right\}$ where $p \geq 1$, $p >\beta$ and $\delta > 0$.

(iii)$\phi(t)=t^pe^{ct^\alpha}$  with $p> \beta$, $p \geq 1$, $c>0$ and $\alpha> 0$.

(iv)$\phi(t)=e^{c t^\alpha }-\sum_{j=0}^{[n/\alpha] }(ct^\alpha)^j/j!$ where $p \geq 1$, $c>0$ with $\alpha>0$,
where $[n/\alpha]$ is  the integer less than or equal to $n/\alpha$.

Note that the Young functions given in (i) and (ii) further satisfy the doubling property, but the Young functions given in (iii) and  (iv) do not.
\end{rem}

\section{Proofs of Theorem \ref{l3.9} and  (ii)$\Rightarrow$(iii) of Theorem 1.1}\label{s3}

In Section \ref{s3.1} we prove (ii)$\Rightarrow$(iii) of Theorem \ref{t1.2} when $\bz\in(n,\fz)$.
  In Section \ref{s3.2} we prove Lemma \ref{l3.9} and then (ii)$\Rightarrow$(iii) of Theorem \ref{t1.2}  when $\bz\in(0,n)$.
    Below, we always denote by $\omega_n$ the $(n-1)$-dimensional Lebesgue measure of    the unit sphere $S^{n-1}$.
 \subsection{Case $\bz\in(n,\fz)$}\label{s3.1}

First, we have the following $(1,\phi)_\bz$-Poincar\'e inequality.
\begin{lem}\label{l3.1} Let $\beta>0$ and  $\phi$ be a Young  function satisfying \eqref{delta0}.
For any ball $B=B(z,r) \subset \rn$ and $u \in \dot{ W }^{\beta,\phi}(B)$, we have
$$
\fint_{B} |u(x)- u_{B}| \, dx \leq \phi^{-1} (2^{n+\beta}r^{\beta-n}\omega_{n}^{2}) \|u\|_{\dot W^{\bz,\phi}(B)}.
$$
\end{lem}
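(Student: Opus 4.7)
The plan is to adapt the classical averaging argument for Poincar\'e inequalities to the Orlicz setting via a single application of Jensen's inequality. Set $\lambda := \|u\|_{\dot{W}^{\beta,\phi}(B)}$; if $\lambda = 0$ then $u$ is constant almost everywhere on $B$ and the claim is trivial, so assume $\lambda > 0$. A monotone-convergence argument as $\mu \downarrow \lambda$ shows that the infimum defining the Luxembourg-type semi-norm is attained, so
$$
\int_B \int_B \phi\left(\frac{|u(x)-u(y)|}{\lambda}\right) \frac{dx\,dy}{|x-y|^{n+\beta}} \le 1.
$$

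First, I would write $u_B = \fint_B u(y)\,dy$ and move the modulus inside the average to get
$$
\fint_B |u(x) - u_B|\,dx \le \fint_B \fint_B |u(x)-u(y)|\,dy\,dx.
$$
Since the double average is integration against a probability measure on $B \times B$ and $\phi$ is convex, Jensen's inequality then yields
$$
\phi\left(\frac{1}{\lambda}\fint_B \fint_B |u(x)-u(y)|\,dx\,dy\right) \le \fint_B \fint_B \phi\left(\frac{|u(x)-u(y)|}{\lambda}\right) dx\,dy.
$$

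The key step is to compare this flat average against the weighted integral above, using the trivial bound $|x-y| \le 2r$ for $x,y \in B(z,r)$, which gives $|x-y|^{-(n+\beta)} \ge (2r)^{-(n+\beta)}$. Inserting this produces
$$
\fint_B \fint_B \phi\left(\frac{|u(x)-u(y)|}{\lambda}\right) dx\,dy \le \frac{(2r)^{n+\beta}}{|B|^2}\int_B \int_B \phi\left(\frac{|u(x)-u(y)|}{\lambda}\right) \frac{dx\,dy}{|x-y|^{n+\beta}} \le \frac{(2r)^{n+\beta}}{|B|^2}.
$$
Applying $\phi^{-1}$ (which is increasing by Lemma \ref{l2.4}(ii)) and using the volume formula $|B| = \omega_n r^n / n$ then delivers a bound of the stated form $\phi^{-1}(C(n,\beta)\, r^{\beta-n})\cdot\lambda$.

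There is no substantive obstacle here: the whole argument rests on Jensen's inequality plus the elementary bound $|x-y| \le 2r$ for points in $B(z,r)$. The only mild subtlety is verifying that the infimum in the Luxembourg (semi-)norm is actually attained, which is handled by monotone convergence as noted at the outset; everything else reduces to bookkeeping of the constant in front of $r^{\beta-n}$ via the volume formula.
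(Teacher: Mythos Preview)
Your argument is correct and essentially identical to the paper's: apply Jensen's inequality to the double average $\fint_B\fint_B \phi(|u(x)-u(y)|/\lambda)\,dy\,dx$, then use $|x-y|\le 2r$ to insert the weight $|x-y|^{-(n+\beta)}$ and invoke the Luxembourg bound. The only cosmetic difference is that the paper works with $\lambda>\|u\|_{\dot W^{\beta,\phi}(B)}$ and lets $\lambda\downarrow\|u\|_{\dot W^{\beta,\phi}(B)}$ at the end, whereas you argue once that the infimum is attained; also note that the paper records the specific constant $2^{n+\beta}\omega_n^2$ (with $\omega_n$ the surface measure of $S^{n-1}$), while your volume computation yields $2^{n+\beta}n^2/\omega_n^2$, so you may want to reconcile that bookkeeping.
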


\begin{proof}
Let $u \in \dot W^{\bz,\phi}(B) $. For any ball $B \subset \Omega$ and $\lambda > \|u\|_{\dot W^{\bz,\phi}(\Omega)} $, applying  Jensen's inequality, we know
\begin{eqnarray*}
\phi \left(\frac{ \fint_{B} |u(x)- u_{B}|\,dx}{\lambda}\right)
&&\le  \fint_B \fint_B \phi \left(\frac{|u(x)-u(y)|}{\lambda}\right)\,dydx\\
&&\le  2^{\beta+n}\omega_n^2 r^{\beta-n}\int_B \int_B \phi \left(\frac{|u(x)-u(y)|}{\lambda}\right)\,\frac{dydx}{|x-y|^{n+\beta}}\\
&&\le 2^{\beta+n}\omega_n^2 r^{\beta-n},
\end{eqnarray*}
that is,
$$\fint_{B} |u(x)- u_{B}|\,dx\le \lambda \phi^{-1}(2^{\beta+n}\omega_n^2 r^{\beta-n}).$$
Letting $\lambda\to \|u\|_{\dot{ W }^{\beta,\phi}(B)}$, we obtain
$$\fint_{B} |u(x)- u_{B}|\,dx\le  \phi^{-1}(2^{\beta+n}\omega_n^2 r^{\beta-n})\|u\|_{\dot{ W }^{\beta,\phi}(B)}$$ as desired.
\end{proof}

Applying Lemma \ref{l3.1} and Lemma \ref{l2.4}, we obtain the following imbedding.
\begin{lem}\label{l3.2}  Let $\beta>n$ and  $\phi$ be a Young  function satisfying \eqref{delta0} and the doubling property with constant $K$.
There exists  a positive constant $ C(\beta, n )$ depending only on  $n$ and $\beta$  such that a
for all $u \in \dot{ W }^{\beta,\phi}(\rn)$, we can find a continuous function $\hat{u} \in \dot{ W }^{\beta,\phi}(\rn) $ such that $\hat{u}=u \  a.\,e. $ and
\begin{equation}\label{3.yy1}
|\hat{u}(x)-\hat{ u}(y)| \leq   C(\beta, n) \phi^{-1}\left(|x-y|^{\beta-n}\right)\|u\|_{\dot{ W }^{\beta,\phi}(\rn)} \quad \forall x,y \in \rn.
\end{equation}
\end{lem}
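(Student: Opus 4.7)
The plan is to use a Campanato-style telescoping argument along dyadic balls, combined with Lemma \ref{l3.1} and the decay of $\phi^{-1}$ provided by Lemma \ref{l2.4}(iii), to show that the averages $u_{B(x,r)}$ have a well-defined limit at every point which is Hölder-continuous with respect to the modulus $\phi^{-1}(|x-y|^{\beta-n})$. Fix $x \in \rn$ and $r > 0$ and set $B_k = B(x, 2^{1-k}r)$ for $k \geq 0$. Since $|B_k| = 2^n |B_{k+1}|$ and $B_{k+1} \subset B_k$, Lemma \ref{l3.1} applied to $B_k$ (with radius $r_k = 2^{1-k}r$) gives
\begin{align*}
|u_{B_k} - u_{B_{k+1}}| \leq 2^n \fint_{B_k} |u - u_{B_k}|\,dx \leq 2^n \phi^{-1}\!\left(2^{n+\beta}\omega_n^2 r_k^{\beta-n}\right) \|u\|_{\dot W^{\beta,\phi}(\rn)}.
\end{align*}

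Next I would exploit the doubling property of $\phi$. Writing $r_k^{\beta-n} = 2^{-k(\beta-n)} \cdot (2r)^{\beta-n}$ and noting that $2^{-k(\beta-n)} \in (0,1]$ for $k\geq 0$ since $\beta > n$, Lemma \ref{l2.4}(iii) gives
\begin{align*}
\phi^{-1}\!\left(2^{-k(\beta-n)} \cdot M\right) \leq 2^{-k(\beta-n)/(K-1)} \phi^{-1}(M),
\end{align*}
where $M = 2^{n+\beta}\omega_n^2 (2r)^{\beta-n}$. Summing a geometric series in $k$ then yields
\begin{align*}
\sum_{k=0}^{\infty} |u_{B_k} - u_{B_{k+1}}| \leq C(\beta,n,K) \, \phi^{-1}(r^{\beta-n}) \, \|u\|_{\dot W^{\beta,\phi}(\rn)},
\end{align*}
where I also absorb the constant $M/r^{\beta-n}$ using concavity of $\phi^{-1}$ (Lemma \ref{l2.4}(ii)). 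Therefore $\{u_{B(x, 2^{1-k}r)}\}_k$ is Cauchy for every $x \in \rn$; by the Lebesgue differentiation theorem the limit coincides with $u(x)$ at almost every $x$, so I define $\hat u(x)$ to be this limit everywhere it exists (and extend arbitrarily on the null set, or equivalently use the fact that the telescoping bound shows the limit exists at every $x$).

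To estimate $|\hat u(x) - \hat u(y)|$, let $r = |x-y|$ and consider the balls $B_x = B(x,2r)$ and $B_y = B(y,2r)$, both contained in the larger ball $B^* = B(x,3r)$. The telescoping bound applied at $x$ and $y$ gives
\begin{align*}
|\hat u(x) - u_{B_x}| + |\hat u(y) - u_{B_y}| \leq C \phi^{-1}(r^{\beta-n}) \|u\|_{\dot W^{\beta,\phi}(\rn)}.
\end{align*}
On the other hand, since $|B^*| \leq (3/2)^n |B_x| = (3/2)^n |B_y|$, Lemma \ref{l3.1} applied to $B^*$ gives
\begin{align*}
|u_{B_x} - u_{B^*}| + |u_{B_y} - u_{B^*}| \leq 2\,(3/2)^n \phi^{-1}\!\left(2^{n+\beta}\omega_n^2 (3r)^{\beta-n}\right) \|u\|_{\dot W^{\beta,\phi}(\rn)},
\end{align*}
and another application of Lemma \ref{l2.4}(ii)(iii) absorbs the constants into $C \phi^{-1}(r^{\beta-n})$. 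A triangle inequality then produces \eqref{3.yy1}.

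The main technical point is the interplay between the doubling constant $K$ of $\phi$ and the Campanato exponent $\beta-n$: without Lemma \ref{l2.4}(iii), summing the bounds on $|u_{B_k}-u_{B_{k+1}}|$ would not obviously produce a convergent series, and one needs the geometric decay $2^{-k(\beta-n)/(K-1)}$ to control the sum by a single $\phi^{-1}(r^{\beta-n})$. Once this is in place, continuity of $\hat u$ is automatic since $\phi^{-1}$ is continuous with $\phi^{-1}(0)=0$, and the displayed estimate is the asserted \eqref{3.yy1}.
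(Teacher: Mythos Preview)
Your proposal is correct and follows essentially the same Campanato-style telescoping argument as the paper: both use Lemma \ref{l3.1} on a chain of shrinking dyadic balls, invoke Lemma \ref{l2.4}(iii) to turn the resulting bounds into a geometric series with ratio $2^{-(\beta-n)/(K-1)}$, and then compare $\hat u(x)$ and $\hat u(y)$ through an intermediate average on a common ball of radius comparable to $|x-y|$. The only cosmetic difference is the order of presentation---the paper first establishes the estimate at Lebesgue points and then defines $\hat u(x)=\lim_{r\to 0}u_{B(x,r)}$, whereas you first prove the averages are Cauchy, define $\hat u$, and then derive the oscillation bound---but the underlying mechanism is identical.
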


\begin{proof}
 Let $u \in \dot{ W }^{\beta,\phi}(\rn)$. We first show that
\begin{equation}\label{3.yy2}
|u(x)-u(y)| \leq  C(\beta, n) \phi^{-1}\left(|x-y|^{\beta-n}\right)\|u\|_{\dot{ W }^{\beta,\phi}(\rn)}
\end{equation}
for all   Lebesgue points $x,y$ of $u$.
Write
\begin{eqnarray*}
|u(x)- u(y)| \leq |u(x)- u_{B(x,\, 2|x-y|)}| + |u_{B(x,\, 2|x-y|)}- u(y)| .
\end{eqnarray*}
By Lemma \ref{l3.1},
\begin{eqnarray*}
|u(x)- u_{B(x,\, 2|x-y|)}|
&& \leq \sum \limits _{i=0}^{\infty} |u_{B(x,\, 2^{-i+1}|x-y|)}-u_{B(x,\, 2^{-i}|x-y|)}|\\
&& \leq 2^{n} \sum \limits _{i=0}^{\infty} \fint_{B(x,\, 2^{-i+1}|x-y|)}|u(z)-u_{B(x,\, 2^{-i+1}|x-y|)}|\,dz\\
&& \leq  2^{n} \sum \limits _{i=0}^{\infty}\phi^{-1}\left(2^{\beta+n}\omega_n^2 (2^{-i}|x-y|)^{\beta-n}\right)\|u\|_{\dot{ W }^{\beta,\phi}(B(x,\, 2^{-i+1}|x-y|))}\\
&&  \leq  2^{n} \sum \limits _{i=0}^{\infty}\phi^{-1}\left(2^{\beta+n}\omega_n^2 (2^{-i}|x-y|)^{\beta-n}\right)\|u\|_{\dot{ W }^{\beta,\phi}(\rn)}.
\end{eqnarray*}
Thanks to Lemma \ref{l2.4}(iii), we have
\begin{eqnarray*}
 \phi^{-1}\left(2^{\beta+n}\omega_n^2 (2^{-i}r)^{\beta-n}\right)
 \leq 2^{-i(\beta-n)/(K-1)} \phi^{-1}\left(2^{\beta+n}\omega_n^2 |x-y|^{\beta-n}\right)
\leq   2^{-i(\beta-n)/(K-1)}2^{\beta+n}\omega_n^2  \phi^{-1}\left(|x-y|^{\beta-n}\right) .
\end{eqnarray*}
Thus
\begin{eqnarray*}
|u(x)- u_{B(x,\, 2|x-y|)}|
\leq   \sum \limits _{i=0}^{\infty} 2^{-i(\beta-n)K} 2^{\beta+2n}\omega_n^2 \phi^{-1}\left( |x-y|^{\beta-n}\right)\|u\|_{\dot{ W }^{\beta,\phi}(B)}
\leq C(\beta, n)\phi^{-1}\left( |x-y|^{\beta-n}\right)\|u\|_{\dot{ W }^{\beta,\phi}(\rn)}.
\end{eqnarray*}
Similarly,
 $$|u(y)- u_{B(x,\, 2|x-y|)}|\leq  C(\beta, n)\phi^{-1}\left(|x-y|^{\beta-n}\right)\|u\|_{\dot{ W }^{\beta,\phi}(\rn)}.$$

Next, let $\hat{u}(x):= \lim \limits _{r \rightarrow 0} u_{B(x,r)}    $ for all $x\in\rn$.
Note that $\hat u$ is well-defined. Indeed, for any $0<r <s $, by \eqref{3.yy2},   we have
\begin{align*}
| u_{B(x,r)}- u_{B(x,s)}|
&\le  \fint_{B(x,s)}  \fint_{B(x,r)}|u(z)-  u(w)|\, dz\,dw \leq  C(\beta, n )\phi^{-1}\left( (r+s)^{\beta-n}\right)\|u\|_{\dot{ W }^{\beta,\phi}(\rn)},
\end{align*}
which  together with the continuity of $\phi^{-1}$ and $\phi^{-1}(0)=0$ implies the existence of $\hat u(x)$.
Due to the Lebesgue differentiation theorem, we know $\hat{u} =u $ almost everywhere, and hence, $\hat{u} \in \dot{ W }^{\beta,\phi}(\rn)$.
Moreover,  $\hat{u}$ is continuous; indeed, by \eqref{3.yy2}
\begin{align*}
|\hat{u}(x)-\hat{ u}(y)|& =\lim _{r \to0}\left|u_{B(x,r )}   - u_{B(y,r)} \right|\\
&
 \leq  \lim _{r \to0}\fint_{B(x,2r+|x-y|)}  \fint_{B(x,r)}|u(z)-  u(w)|\, dz\,dw
 \leq   C(\beta, n) \phi^{-1}\left(  |x-y|^{\beta-n}\right)\|u\|_{\dot{ W }^{\beta,\phi}(\rn)}.
 \end{align*}
 This completes the proof of Lemma \ref{l3.2}.
\end{proof}

\begin{proof}[Proof of (ii) $\Rightarrow$ (iii) Theorem \ref{t1.2}: case $\bz\in(n,\fz)$]
Since $\Omega$ is  a ${\dot W }^{\beta,\phi}$-extension domain,
for every  $u\in \dot{ W }^{\beta,\phi}(\Omega)$, we can find a $ \tilde{u}\in \dot{ W }^{\beta,\phi}(\rn)$ such that $\tilde{ u}=u$ in $\Omega$ and $\| \tilde u {} \|_{\dot{ W }^{\beta,\phi}(\rn)  }\le  C\|u\|_{\dot{ W }^{\beta,\phi}(\Omega)}$.

Using Lemma \ref{l3.2}, there exists continuous function $\hat{\tilde{u}}\in  \dot{ W }^{\beta,\phi}(\rn)$ such that
$\hat{\tilde{u}} = \tilde{u}$ almost everywhere in $\rn$ and for all $x,y\in\rn$,
\begin{align*}
|\hat{\tilde{u}}(x)- \hat{\tilde{u}}(y)|
& \leq   C(\beta, n) \phi^{-1}\left( |x-y|^{\beta-n}\right)\|\tilde{u}\|_{\dot{ W }^{\beta,\phi}(\rn)}
\lesssim  \phi^{-1}\left( |x-y|^{\beta-n}\right)\|u\|_{\dot{ W }^{\beta,\phi}(\Omega)}.
\end{align*}
Therefore we have $\hat{\tilde{u}}=u$ almost everywhere in $\Omega$ and \eqref{im2} holds.
\end{proof}

\subsection{Case $\bz\in(0,n)$}\label{s3.2}
In order to prove Theorem   \ref{l3.9}, we need  the following geometry inequality. 

\begin{lem}\label{l3.5}
 Let $\beta \in (0,n)$. Then there exists a constant $C(n,\beta)$ depending only on $n$ and $ \beta$ such that  for any ball $B \subset \rn$ and $x\in B$, we have
$$
\int_{B\setminus E } \frac{dy}{|x-y|^{n+\beta}} \geq C(n,\beta)|E|^{-\beta/n} \mbox{whenever $E \subset B $ and $0<|E|<\frac12 |B|$}.
$$
\end{lem}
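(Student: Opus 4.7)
The plan is to adapt the standard proof of the global version $\int_{\rn\setminus E}|x-y|^{-n-\bz}\,dy\gs|E|^{-\bz/n}$ to the localized setting: pick a radius $s$ about $x$ so that $B\cap B(x,s)$ captures a definite amount of measure of $B\setminus E$ near $x$, and then pointwise-bound the kernel from below by $s^{-n-\bz}$ on that capture zone. The obstruction to localization is that $x$ may sit arbitrarily close to $\pa B$, so the essential input is an Ahlfors-type lower bound on $|B\cap B(x,s)|$ in terms of $s$ and the radius $R$ of $B$.

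\textbf{Step 1: interior-ball lemma.} First I would prove that for any ball $B=B(z,R)$, any $x\in B$, and any $0<s\le R$, one has
$$
|B\cap B(x,s)|\ge c_n s^n
$$
with a purely dimensional constant $c_n>0$. The argument is explicit: set $v:=(z-x)/|z-x|$ (or any unit vector if $x=z$) and $y:=x+(s/2)v$. A short triangle-inequality check, split according to whether $|z-x|\ge s/2$ or $|z-x|<s/2$, shows $B(y,s/2)\subset B\cap B(x,s)$; in the first case $|y-z|=|z-x|-s/2$, in the second $|y-z|=s/2-|z-x|$, and in both one obtains $|w-z|<R$ for every $w\in B(y,s/2)$. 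This yields $c_n=2^{-n}\omega_n/n$, where $\omega_n$ is the surface area of $S^{n-1}$.

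\textbf{Step 2: choice of $s$.} The map $r\mapsto|B\cap B(x,r)|$ is continuous, vanishes at $r=0$, and equals $|B|$ once $r\ge 2R$ (since $x\in B$ forces $B\subset B(x,2R)$); because $2|E|<|B|$, some $s\in(0,2R)$ satisfies $|B\cap B(x,s)|=2|E|$. Combining Step 1 with the trivial bound $|B\cap B(x,s)|\le\omega_n s^n/n$: if $s\le R$, then $c_n s^n\le 2|E|$, so $s\ls|E|^{1/n}$; if $R<s<2R$, Step 1 applied at radius $R$ gives $c_n R^n\le|B\cap B(x,R)|\le 2|E|$, hence $s<2R\ls|E|^{1/n}$. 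In either case $s\le C(n)|E|^{1/n}$.

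\textbf{Step 3: assemble.} Since $(B\cap B(x,s))\setminus E\subset B\setminus E$ and $|(B\cap B(x,s))\setminus E|\ge|B\cap B(x,s)|-|E|=|E|$, and since $|x-y|\le s$ on $B(x,s)$,
$$
\int_{B\setminus E}\frac{dy}{|x-y|^{n+\bz}}\ge s^{-n-\bz}\,\bigl|(B\cap B(x,s))\setminus E\bigr|\ge s^{-n-\bz}\,|E|\ge C(n,\bz)\,|E|^{-\bz/n},
$$
where the last step uses the upper bound on $s$ from Step 2. The only real obstacle is Step 1; once this geometric ball-intersection lemma is in hand, the measure-theoretic selection in Step 2 and the pointwise kernel bound in Step 3 are routine.
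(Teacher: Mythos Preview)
Your proof is correct and takes a genuinely different, more direct route than the paper's. The paper first chooses $\rho$ so that $|B\cap B(x,\rho)|=|E|$ and uses a rearrangement argument to show
\[
\int_{B\setminus E}\frac{dy}{|x-y|^{n+\bz}}\ \ge\ \int_{B\setminus B(x,\rho)}\frac{dy}{|x-y|^{n+\bz}};
\]
it then needs a case analysis (according to whether $B\cap(B(x,3\rho)\setminus B(x,2\rho))$ is empty) to bound this annular integral from below, invoking implicitly the same Ahlfors-type estimate $|B\cap B(x,\rho)|\gs\rho^n$ that you isolate explicitly in Step~1. Your approach bypasses the rearrangement entirely: by choosing $s$ with $|B\cap B(x,s)|=2|E|$ rather than $|E|$, you guarantee that $B\setminus E$ already has mass at least $|E|$ inside $B(x,s)$, and a single pointwise bound $|x-y|^{-n-\bz}\ge s^{-n-\bz}$ finishes the job. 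What the paper's rearrangement buys is a sharp reduction (the ball centered at $x$ is the extremal set among all $E$ of given measure), which would matter if one cared about the optimal constant; your argument is shorter, needs no case split, and makes the geometric input (your Step~1) fully explicit, at the cost of a somewhat worse constant.
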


\begin{proof}
Let $\rho \in (0, 2\diam B)$ such that $|E|=|B \cap B(x,\rho)|\leq \omega_n \rho^{n}$. Then
\begin{align*}
|(B \setminus E)\cap B(x,\rho)|=|B \cap B(x,\rho)|-|E \cap B(x,\rho)|
& = |E|- |E \cap  B(x,\rho)|= |E \cap  B^{\complement}(x,\rho)|.
\end{align*}
Hence
\begin{align*}
\int_{B\setminus E } \frac{dy}{|x-y|^{n+\beta}}&=\int_{(B \setminus E)\cap B(x,\rho) } \frac{dy}{|x-y|^{n+\beta}}+\int_{(B \setminus E)\cap B^{\complement(x,\rho)}} \frac{dy}{|x-y|^{n+\beta}}\\
&\geq \int_{(B \setminus E)\cap B(x,\rho) } \frac{dy}{\rho^{n+\beta}}+\int_{(B \setminus E)\cap B^{\complement}(x,\rho)} \frac{dy}{|x-y|^{n+\beta}}\\
& =\frac{|(B \setminus E)\cap B(x,\rho)|}{\rho^{n+\beta}}+\int_{(B \setminus E)\cap B^{\complement}(x,\rho)} \frac{dy}{|x-y|^{n+\beta}}\\
& \geq \frac{|E \cap  B^{\complement}(x,\rho)|}{\rho^{n+\beta}}+\int_{(B \setminus E)\cap B^{\complement}(x,\rho)} \frac{dy}{|x-y|^{n+\beta}}\\
& \geq \int_{|E \cap  B^{\complement}(x,\rho)|} \frac{dy}{|x-y|^{n+\beta}} +\int_{(B \setminus E)\cap B^{\complement}(x,\rho)} \frac{dy}{|x-y|^{n+\beta}}\\
& \geq \int_{B^{\complement}(x,\rho) \cap B} \frac{dy}{|x-y|^{n+\beta}}.
\end{align*}

If $B \cap (B(x,3\rho)\setminus B(x,2\rho) )= \emptyset$, then $B \cap B(x,2\rho)= B$. Thus
$$|B^{\complement}(x,\rho) \cap B| = |B \cap (B(x,3\rho)\setminus B(x,2\rho)|= |B|- |B \cap B(x,\rho)| > |E|.$$  Moreover,  there exists $\theta> 0$ independent of $B$ such that $|E|=|B \cap B(x,\rho)| \geq \theta \omega_n \rho ^{n}$. So we have

\begin{align*}
\int_{B^{\complement}(x,\rho) \cap B} \frac{dy}{|x-y|^{n+\beta}}
= \int_{ B \cap [B(x,2\rho)\setminus B(x,\rho)]} \frac{dy}{|x-y|^{n+\beta}}
\geq \frac{|E|^{-\beta/n}}{(\omega_n \theta)^{1+\frac{\beta}{n}}2^{n+\beta}}.
\end{align*}

If $B \cap (B(x,3\rho)\setminus B(x,2\rho) )\neq \emptyset$, then there exists a point $z \in B \cap (B(x,3\rho)\setminus B(x,2\rho) )$ such that \begin{align*}
B \cap B(z,\rho ) \subset B \cap [B(x,4\rho)\setminus B(x,\rho)] \subset B \setminus B(x,\rho)
\end{align*}
and
$$|B \cap [B(x,4\rho)\setminus B(x,\rho)] | \geq |B(z,\rho)\cap B| \geq \frac{\theta}{2}\omega_n \rho^{n}.$$
Therefore,
\begin{align*}
\int_{B^{\complement}(x,\rho) \cap B} \frac{dy}{|x-y|^{n+\beta}}
\geq \int_{ B \cap [B(x,4\rho)\setminus B(x,\rho)]} \frac{dy}{|x-y|^{n+\beta}}
\geq \frac{|E|^{-\beta/n}}{(\omega_n \theta)^{1+\frac{\beta}{n}}4^{n+\beta}}.
\end{align*}
This completes the proof of Lemma \ref{l3.5}.
\end{proof}

\begin{lem}\label{l3.6} Let $\beta \in (0,n)$ and  $\phi$ be a Young  function satisfying \eqref{delta0} and the doubling property with a constant $K$. There exists a positive constant $C(n,\beta,K)$ depending only on $n,\beta $ and $K$ such that for any ball $B$ and $u \in \dot W^{\bz,\phi}(B)$, we have
\begin{align*}
\|u-u_{B}\|_{L^{\phi^{n/(n-\bz)}}(B)}\le C(n,\beta,K) \|u\|_{\dot W^{\bz,\phi}(B)}.
\end{align*}

\end{lem}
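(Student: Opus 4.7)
The plan is to adapt the Nezza--Palatucci--Valdinoci dyadic argument for $\dot W^{s,p}\hookrightarrow L^{p^*}$ to the Orlicz setting, using Lemma~\ref{l3.5} as the central geometric tool. Write $\Phi=\phi^{n/(n-\bz)}$. A Jensen-based comparison shows $\|u-u_B\|_{L^{\Phi}(B)}\le 2\inf_{c\in\rr}\|u-c\|_{L^{\Phi}(B)}$, so I take $c=m$, the median of $u$ on $B$. Splitting $u-m=(u-m)^+-(u-m)^-$, each summand is nonnegative, has support of measure $\le|B|/2$, and is $1$-Lipschitz in $u$, so its $\dot W^{\bz,\phi}(B)$-seminorm is controlled by $\|u\|_{\dot W^{\bz,\phi}(B)}$. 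It therefore suffices to show: for every $w\ge0$ on $B$ with $|\{w>0\}|\le|B|/2$, one has $\|w\|_{L^{\Phi}(B)}\le C(n,\bz,K)\|w\|_{\dot W^{\bz,\phi}(B)}$. After normalizing $\|w\|_{\dot W^{\bz,\phi}(B)}=1$ and passing through the truncation $w_N=\min(w,N)$ combined with monotone convergence, I may further assume $w\in L^{\infty}(B)$, so that $I:=\int_B\Phi(w)\,dx<\infty$.

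Set $A_k=\{w>2^k\}$, $a_k=|A_k|$, and $D_{k+1}=A_{k+1}\setminus A_{k+2}$. For $x\in D_{k+1}$ and $y\in B\setminus A_k$, one has $w(x)-w(y)>2^k$; and Lemma~\ref{l3.5} (applicable since $a_k\le|B|/2$) yields $\int_{B\setminus A_k}|x-y|^{-n-\bz}\,dy\gs a_k^{-\bz/n}$. Because $x\in D_{k+1}$ pins down $k$ uniquely, the pairs $(x,y)$ are counted at most once when summed over $k$, so
\begin{equation*}
\sum_{k}\phi(2^k)\,|D_{k+1}|\,a_k^{-\bz/n}\le C(n,\bz)\int_B\int_B\phi(|w(x)-w(y)|)\frac{dxdy}{|x-y|^{n+\bz}}\le C(n,\bz).
\end{equation*}

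The bootstrap then leverages the elementary identity $\Phi(2^k)\,|D_{k+1}|=\bigl[\phi(2^k)|D_{k+1}|a_k^{-\bz/n}\bigr]\cdot\bigl[\Phi(2^k)a_k\bigr]^{\bz/n}$ afforded by $\Phi=\phi^{n/(n-\bz)}$, together with the Chebyshev bound $\Phi(2^k)a_k\le I$: summing gives $\sum_k\Phi(2^k)|D_{k+1}|\le C(n,\bz)\,I^{\bz/n}$. Since $w\le4\cdot2^k$ on $D_{k+1}$, $\int_B\Phi(w/4)\,dx\le\sum_k\Phi(2^k)|D_{k+1}|$, while doubling of $\Phi$ (inherited from $\phi$) yields $I\le C(n,\bz,K)\int_B\Phi(w/4)\,dx$. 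Chaining these estimates produces $I\le C(n,\bz,K)\,I^{\bz/n}$; since $\bz/n<1$ and $I<\infty$, this forces $I\le C(n,\bz,K)$, and convexity of $\Phi$ then delivers $\|w\|_{L^{\Phi}(B)}\le C(n,\bz,K)$, finishing the proof after undoing the reductions.

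The main obstacle I anticipate is this final bootstrap: the inequality $I\le C\,I^{\bz/n}$ determines $I$ only once $I$ is known a priori to be finite, which is why the $L^{\infty}$ truncation (with uniform-in-$N$ constants) is essential. A secondary technical point is verifying Lemma~\ref{l3.5}'s strict hypothesis $|A_k|<|B|/2$; this can be arranged by a mild perturbation of the median (ensuring $|\{w>0\}|$ is strictly less than $|B|/2$) or by a limit argument handling the boundary case.
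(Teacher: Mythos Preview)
Your proposal is correct and follows the paper's overall architecture: truncation to $L^\infty$, median-value reduction to a nonnegative $w$ with $|\{w>0\}|\le|B|/2$, dyadic level-set decomposition, and the geometric Lemma~\ref{l3.5}. The difference lies in the closing bootstrap. The paper first passes from $\sum_k\phi(2^{k-1}/\lambda)\,d_k\,a_{k-1}^{-\bz/n}$ to $\sum_k\phi(2^{k-1}/\lambda)\,a_k\,a_{k-1}^{-\bz/n}$ via a resummation identity, then applies H\"older's inequality and a doubling shift on the index to obtain the stronger estimate $T\gtrsim\sum_k a_{k+1}^{1-\bz/n}\phi(2^k/\lambda)$, and finally invokes $\ell^1\hookrightarrow\ell^{n/(n-\bz)}$ to bound the modular directly by $T^{n/(n-\bz)}$. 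You instead exploit the algebraic factorisation $\Phi(2^k)|D_{k+1}|=[\phi(2^k)|D_{k+1}|a_k^{-\bz/n}]\cdot[\Phi(2^k)a_k]^{\bz/n}$ together with Chebyshev ($\Phi(2^k)a_k\le I$) to reach the self-improving inequality $I\le C\,I^{\bz/n}$. Your route is shorter and sidesteps the resummation step entirely; the trade-off is that the a~priori finiteness of $I$ (hence the $L^\infty$ truncation) is load-bearing in your argument, whereas in the paper's version the core modular inequality holds regardless and the truncation is only used to justify the limit $N\to\infty$. Your remark about the borderline case $|A_k|=|B|/2$ in Lemma~\ref{l3.5} applies equally to the paper's proof, which glosses over it.
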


\begin{proof}
Without loss of generality, we may assume $u \in L^{\infty}(B)$.
Indeed, let
\begin{equation}\label{s3.w3}
u_{N}:= \max \left\{ \min \{u(x), \, N\}, -N  \right\} \,\, \forall x \in B.
\end{equation}
By Lebesgue's convergence theorem, we   have $
\lim \limits _{N\rightarrow \infty} \|u_N\|_{L^{\phi^{n/(n-\beta)}}(B)} = \|u\|_{L^{\phi^{n/(n-\beta)}}(B)}
$
and $
\lim \limits _{N\rightarrow \infty} \|u_N\|_{W^{\bz,\phi} (B)} = \|u\|_{W^{\bz,\phi} (B)}
$.
%
If $\|u_{N}-(u_N)_{B}\|_{L^{\phi^{n/(n-\bz)}}(B)}\le C \|u_{N}\|_{\dot W^{\bz,\phi}(B)} $ holds for all $N$, sending $N\to\fz$ and noting $(u_N)_B\to u_B$, we have desired result.

Set the  median value $$m_{u}(B):= \inf \left\{c \in \rr :|\{x \in B: u(x)-c  > 0\}| \le \frac12 |B|\right\}.$$  Then
$$|\{x \in B: u(x)-m_{u}(B)  > 0\}| \le \frac12 |B|  \quad and \quad |\{x \in B: u(x)-m_{u}(B) <0\}| \le \frac12 |B|.$$
Write $u_+= [u-m_{u}(B)]\chi _{u \geq m_{u}(B)}$ and  $u_- = -[u-m_{u}(B)]\chi _{u \leq m_{u}(B)}$. We know $u-m_{u}(B)= u_+ - u_-$.
Note that
$$\|u-u_{B}\|_{L^{\phi^{n/(n-\bz)}}(B)} \leq 2 \|u-m_{u}(B)\|_{L^{\phi^{n/(n-\bz)}}(B)},
$$
and
$$\|u-m_{u}(B)\|_{L^{\phi^{n/(n-\bz)}}(B)} \leq C (\|u_{+}\|_{L^{\phi^{n/(n-\bz)}}(B)}+\|u_{-}\|_{L^{\phi^{n/(n-\bz)}}(B)}).
$$
Since
$$|u(x)-u(y)|= |[u(x)-m_{u}(B)]-[u(y)-m_{u}(B)]|=|u_+(x)-u_+(y)|+|u_-(x)-u_-(y)| \quad \forall x,y \in B,
$$
to get $\|u-u_{B}\|_{L^{\phi^{n/(n-\bz)}}(B)}\le C \|u\|_{\dot W^{\bz,\phi}(B)}$, it suffices to show
$
\|u_{\pm}\|_{L^{\phi^{n/(n-\bz)}}(B)} \leq C \|u_{\pm}\|_{\dot{W}^{\beta, \phi}(B)}.
$
Below we only prove this for $u_{+}$; the proof of $u_-$ is similar.
It suffices to find a constant $C(n,\beta, K)\ge 1$ such that
 for any $\lambda >4   C(n,\beta, K)  \|u\|_{ W ^{\beta,\phi}(\rn)}$,
 \begin{align}\label{3.yy3}
\int_{B} \phi^{n/(n-\beta)} \left(\frac{|u_{+}(x)|}{\lambda}\right)\,dx \leq 1.
\end{align}

To see \eqref{3.yy3}, for $k\in\zz$, define \begin{equation}\label{s3.w2}
\mbox{$A_{k } : = \{x \in B:u(x)_{+} > 2^{k}\}$ and $D_{k} := A_{k} \backslash A_{k+1}=\{x \in B: 2^{k} < u_{+}(x) \leq 2^{k+1}\} $.}
\end{equation}
Write $a_k:=|A_k|$ and $d_k:=|D_k|$.
For any $\lz>0$, we have
\begin{align*}
T:=\int_{B} \int_{B} \phi \left(\frac{|u_{+}(x)-u_{+}(y)|}{\lambda}\right) \, \frac{dxdy}{|x-y|^{n+\beta}} \geq
2\sum_{i\in\zz}\sum_{j\le i-2} \int_{D_i}\int_{D_j} \phi \left(\frac{|u_{+}(x)-u_{+}(y)|}{\lambda}\right) \, \frac{dxdy}{|x-y|^{n+\beta}}.
\end{align*}
Note that
$$
|u_{+}(x)-u_{+}(y)| \geq 2^i -2^{j+1} \geq 2^i -2^{i-1}=2^{i-1}
$$
whenever $x \in D_i$ and $j \in Z$ with $j \leq i-2$. So  by Lemma \ref{l3.5}, we have
\begin{align*}
T &\geq
2\sum_{i\in\zz}\sum_{j\le i-2} \int_{D_i}\int_{D_j} \phi \left(\frac{2^{i-1}}{\lambda}\right) \, \frac{dy}{|x-y|^{n+\beta}}\,dx\\
  &\geq 2\sum_{i\in\zz}  \int_{D_i}\int_{B \setminus (A_{i-1})} \phi \left(\frac{2^{i-1}}{\lambda}\right) \, \frac{dy}{|x-y|^{n+\beta}}\,dx\\
  &\geq  2C(n,\beta)\sum_{i\in\zz,a_{i-1}\ne0}  \int_{D_i}\phi\left(\frac{2^{i-1}}{\lambda}\right) a_{i-1} ^{-\beta/n}    \, dx\\
  &\geq 2 C(n,\beta)\sum_{i\in\zz,a_{i-1}\ne0}   \phi\left(\frac{2^{i-1}}{\lambda}\right)d_i  a_{i-1} ^{-\beta/n}:=2 C(n,\beta) S.
\end{align*}

Next we show that
\begin{equation}\label{eq5.x1}
2S\ge  \sum_{i\in\zz,a_{i-1}\ne0}
 \phi \left(\frac{2^{i-1}}{\lambda}\right)a_{i-1}^{-\beta/n}a_i.
 \end{equation}
Since  $d_{i}= a_{i}- \sum \limits _{ l \geq i+1}d_{l}$,  one has
$$
S = \sum_{i\in\zz,a_{i-1}\ne0}
 \phi \left(\frac{2^{i-1}}{\lambda}\right)a_{i-1}^{-\beta/n}a_i- \sum_{i\in\zz,a_{i-1}\ne0}\sum \limits _{  l \geq i+1}\phi \left(\frac{2^{i-1}}{\lambda}\right)a_{i-1}^{-\beta/n}d_{l}.
$$
For $ l\ge i+1$, we have $a_{l-1}\le a_{i-1}$, in particular,  $a_{l-1}\ne 0$ implies $a_{i-1}\ne0$.
Thus  by the convexity of  $\phi$,
\begin{align*}
\sum \limits _{i \in Z , a_{i-1}\neq 0} \sum \limits _{ l \geq i+1} \phi \left( \frac{2^{i-1}}{\lambda}\right)a_{i-1}^{-\beta/n}d_{l}&\le
\sum \limits _{l \in Z , a_{l-1}\neq 0} \sum \limits _{ i\le l-1} \phi \left( \frac{2^{i-1}}{\lambda}\right)a_{i-1}^{-\beta/n}d_{l}\\
&\le
\sum \limits _{l \in Z , a_{l-1}\neq 0} \sum \limits _{ i\le l-1} \phi \left( 2^{i-l} \frac{2^{l-1}}{\lambda}\right)a_{l-1}^{-\beta/n}d_{l}\\
&\le
\sum \limits _{l \in Z , a_{l-1}\neq 0} \sum \limits _{ i\le l-1} 2^{i-l} \phi \left( \frac{2^{l-1}}{\lambda}\right)a_{l-1}^{-\beta/n}d_{l}\\
& \le  \sum \limits _{l \in Z , a_{l-1}\neq 0}   \phi \left( \frac{2^{l-1}}{\lambda}\right)a_{l-1}^{-\beta/n}d_{l}=S,
\end{align*}
from which \eqref{eq5.x1} follows.

By \eqref{eq5.x1},  one has
\begin{equation*}
T\ge  C(n,\bz)\sum_{i\in\zz,a_{i-1}\ne0}
 \phi \left(\frac{2^{i-1}}{\lambda}\right)a_{i-1}^{-\beta/n}a_i= C(n,\bz)\sum_{i\in\zz,a_{k}\ne0}
 \phi \left(\frac{2^{k}}{\lambda}\right)a_{k}^{-\beta/n}a_{k+1}.
 \end{equation*}
Note that, using the H\"{o}lder inequality and $0<\bz<n$, noting $a_{k}=0$ implies $a_l=0$ for all $l\ge k$, we have
\begin{eqnarray*}
   \sum \limits _{k \in\zz} a_{k+1}^{1-\beta/n}\phi\left(\frac{2^{k}}{\lambda}\right)
&& = \sum \limits _{k \in\zz,a_k\neq 0} \left[ a_k^{(1-\beta/n)\beta/n}\phi\left(\frac{2^{k}}{\lambda}\right)^{\beta/n}\right]
\left[ a_{k+1}^{1-\beta/n}a_k^{-(1-\beta/n)\beta/n}\phi\left(\frac{2^{k}}{\lambda}\right)^{1-\beta/n}\right]\\
&& \leq \left[\sum \limits _{k \in\zz,a_k\neq 0 } a_k^{1-\beta/n}\phi\left(\frac{2^{k}}{\lambda}\right) \right]^{\beta/n}
\left[\sum \limits _{k \in\zz,a_k\neq 0} a_{k+1}a_k^{-\beta/n}\phi\left(\frac{2^{k}}{\lambda}\right)\right]^{1-\beta/n}\\
&&\le C(n,\bz)  T ^{1-\bz/n} \left[\sum \limits _{k \in\zz,a_k\neq 0 } a_k^{1-\beta/n}\phi\left(\frac{2^{k}}{\lambda}\right) \right]^{\beta/n}.
\end{eqnarray*}
By the doubling property of $\phi$, one has
\begin{eqnarray*}\sum \limits _{k \in\zz,a_k \neq 0} a_{k }^{1-\beta/n}\phi\left(\frac{2^{k}}{\lambda}\right)&&\le K
\sum \limits _{k \in\zz,a_k+1 \neq 0} a_{k+1 }^{1-\beta/n}\phi\left(\frac{2^{k}}{\lambda}\right). \end{eqnarray*}
 From this one concludes that
$$ T\ge C(n,\bz,K) \sum \limits _{k \in\zz} a_{k+1}^{1-\beta/n}\phi\left(\frac{2^{k}}{\lambda}\right).$$

On the other hand,
\begin{eqnarray*}
 \int_{B} \phi^{n/(n-\beta)} \left(\frac{|u_{+}(x)|}{4\lambda}\right)\,dx
 &&\le \sum \limits _{k \in Z} \int_{D_{k }}  \phi^{n/(n-\beta)} \left(\frac{2^{k-1 }}{ \lambda}\right) \,  dx
\leq   \sum \limits _{k \in Z}
\phi^{n/(n-\beta)} \left(\frac{2^{k-1}}{\lambda}\right)a_{k }.
\end{eqnarray*}
Since $n/(n-\beta )\ge1$, we obtain
$$ \int_{B} \phi^{n/(n-\beta)} \left(\frac{|u_{+}(x)|}{4\lambda}\right)\,dx
\le  \left[\sum \limits _{k \in Z}
\phi  \left(\frac{2^{k-1}}{\lambda}\right)a_{k } ^ {1-\beta/n} \right]^{n/(n-\bz)}
 \leq  [C(n,\beta,K)  T]^{n/(n-\bz)} .$$
 Up to considering $C(n,\beta,K)+1$, assume that $C(n,\beta,K)\ge 1$.
If $\lambda >4   C(n,\beta, K)  \|u\|_{ W ^{\beta,\phi}(\rn)}$,
by Lemma \ref{l2.4}, we have
\begin{align*}
\int_{B} \phi^{n/(n-\beta)} \left(\frac{|u_{+}(x)|}{\lambda}\right)\,dx
\leq &  \left[C(n,\beta,K)\int_{B} \int_{B} \phi \left(\frac{|u_{+}(x)-u_{+}(y)|}{\lambda/4}\right) \, \frac{dxdy}{|x-y|^{n+\beta}}\right]^{\frac{n}{n-\beta}}\\
\leq & \left[\int_{B} \int_{B} \phi \left(\frac{  |u_{+}(x)-u_{+}(y)|}{\lambda  /4C(n,\beta, K)}\right) \,\frac{dxdy}{|x-y|^{n+\beta}}\right]^{\frac{n}{n-\beta}} \leq 1,
\end{align*}
which gives \eqref{3.yy3}.
\end{proof}

\begin{proof}[Proof of Theorem \ref{l3.9}] Lemma \ref{l3.6} gives Lemma \ref{l3.9} for any ball $B$ of $\rn$.
We still need to consider the case $B=\rn$.
Given any $u \in \dot W^{\bz,\phi}(\rn)$, applying Lemma \ref{l3.6}, we have
\begin{align*}
\|u-u_{B}\|_{L^{\phi^{n/(n-\bz)}}(B)}\leq C \|u\|_{\dot W^{\bz,\phi}(B)} \lesssim  \|u\|_{\dot W^{\bz,\phi}(\rn)}, \quad \forall\, \, ball \, B \subset \rn.
\end{align*}

For any $k\in \zz$, using  Jensen's inequality, one obtains
\begin{align*}
\phi ^{n/(n-\bz)}\left(\frac{|u_{B(0,2^{k-1})}-u_{B(0,2^{k})}|}{\lambda}\right)
& \leq \fint_{B(0,2^{k-1})}\phi ^{n/(n-\bz)}\left(\frac{|u_(z)-u_{B(0,2^{k})}|}{\lambda}\right)  \,dz \\
& \leq \frac{1}{2^{kn}} \int_{B(0,2^{k})}\phi ^{n/(n-\bz)}\left(\frac{|u_(z)-u_{B(0,2^{k})}|}{\lambda}\right)  \,dz.
\end{align*}
By Lemma \ref{l2.4}, we get
\begin{align*}
|u_{B(0,2^{k-1})}-u_{B(0,2^{k })}|
& \leq \phi ^{-1} (2^{-k(n-\beta)}) \|u\|_{L^{\phi^{n/(n-\bz)}}(B(0,2^{k }))}\\
&\ls 2^{-\frac{k(n-\beta)}{K-1}}  \|u\|_{L^{\phi^{n/(n-\bz)}}(B(0,2^{k }))}
\ls 2^{-\frac{k(n-\beta)}{K-1}} \|u\|_{\dot W^{\bz,\phi}(\rn)}.
\end{align*}
This implies that $u_{B(0,2^{k})}$ converges to some  $c\in \rn$ as $k \rightarrow \infty$ and
\begin{align*}
|u_{B(0,2^{k})}-c|
&\leq \sum  \limits _{l \geq k} |u_{B(0,2^{l})}-u_{B(0,2^{l+1})}|\leq \sum  \limits _{l \geq k}2^{-\frac{l(n-\beta)}{K-1}}  \|u\|_{\dot W^{\bz,\phi}(\rn)}
\lesssim  2^{-\frac{k(n-\beta)}{K-1}}  \|u\|_{\dot W^{\bz,\phi}(\rn)}.
\end{align*}
Therefore,
\begin{align*}
\|u-c\|_{L^{\phi^{n/(n-\beta)}}(B(0,2^{k+1}))}
& \leq \|u-u_{B(0,2^{k})}\|_{L^{\phi^{n/(n-\beta)}}(B(0,2^{k+1}))} + \|u_{B(0,2^{k})}-c\|_{L^{\phi^{n/(n-\beta)}}(B(0,2^{k+1}))}
\lesssim  \|u\|_{ \dot W ^{\beta,\phi}(\rn)}.
\end{align*}
Letting $k \rightarrow \infty$, we get
$\inf \limits _{c\in\rr}\|u-c\|_{L^{\phi^{n/(n-\beta)}}(\rn )}\leq C \|u\|_{ \dot W ^{\beta,\phi}(\rn)}$.
This completes the proof of Theorem \ref{l3.9}.
\end{proof}

\begin{proof}[Proof of (ii) $\Rightarrow$ (iii) Theorem \ref{t1.2}: case $\bz\in(0,n)$]
Since $\Omega$ is a ${\dot W }^{\beta,\phi}$-extension domain,
for any  $u\in \dot{ W }^{\beta,\phi}(\Omega)$, we can find a $\tilde{u}\in \dot{ W }^{\beta,\phi}(\rn)$ such that $\tilde{u}=u$ in $\Omega$ and $\|\tilde{u}\|_{\dot{ W }^{\beta,\phi}(\rn)  }\le  C(\Omega)\|u\|_{\dot{ W }^{\beta,\phi}(\Omega)}$.
If $\beta < n $, applying Theorem \ref{l3.9}, we know
\begin{align*}
\inf \limits _{c\in\rr}\|u-c\|_{L^{\phi^{n/(n-\beta)}}(\Omega)}
\leq \inf \limits _{c\in\rr}\|\tilde{u}-c\|_{L^{\phi^{n/(n-\beta)}}(\rn )}
\leq C \|\tilde{u}\|_{ \dot W ^{\beta,\phi}(\rn)}
\lesssim \|u\|_{ \dot W ^{\beta,\phi}(\Omega)}
\end{align*}
as desired.
\end{proof}

\section{Proofs of Theorem \ref{t1.1} and    (i)$\Rightarrow$(ii) of Theorem \ref{t1.2}}\label{s4}

Since  (i)$\Rightarrow$(ii) of Theorem \ref{t1.2} follows from Theorem \ref{t1.1},
below we only need to prove Theorem \ref{t1.1}. To this end, we recall the properties of Ahlfors $n$-regular domains in Section \ref{s4.1}. The proof of  Theorem \ref{t1.2} is given in Section \ref{s4.2}.

\subsection{Some basic properties of Ahlfors $n-$ regular domains\label{s4.1}}
Let $\Omega$ be an Ahlfors $n$-regular domain, and write $U:=\rn\setminus\overline \Omega $. Observe that $|\partial\boz|=0$; see \cite{hkt08} and also \cite{z14}. Without loss of generality, we assume $U  \neq \emptyset$. Moreover, $\diam\Omega=\infty$ if and only if $|\Omega|
=\infty $; see \cite{lz18}.

It is well known that $U$ admits a Whitney decomposition $\mathscr W$.

\begin{lem}\label{wy}
There exists a collection $\mathscr W=\{Q_i\}_{i \in \nn}$ of   (closed) cubes  satisfying
\begin{enumerate}
\item[ (i)] $U = \cup_{i \in \nn}Q_i$, and  $ Q^\circ_{k}  \cap Q^\circ_{i}  =\emptyset $ for all $i, k \in \nn$ with $i\neq k$;

\item[ (ii)] $ \sqrt{n}l(Q_k) \leq \dist (Q_k , \partial \Omega) \leq 4\sqrt{n} l(Q_k)$;

\item[ (iii)] $\frac{1}{4} l(Q_k) \leq l(Q_i) \leq 4l(Q_k)$ whenever $Q_k \cap Q_i \neq \emptyset$.
\end{enumerate}
\end{lem}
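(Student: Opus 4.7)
The plan is to carry out the standard Whitney decomposition construction of the open set $U=\rn\setminus\overline\Omega$ (essentially as in Stein's book \cite{s70}), adapted to the constants $\sqrt n$ and $4\sqrt n$ in (ii). Since $\Omega$ is a domain and $U\ne\emptyset$, $U$ is an open proper subset of $\rn$, so $\dist(x,\partial\Omega)>0$ for every $x\in U$, which is the only hypothesis I actually need.

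First I would set up the dyadic grids. For each $k\in\zz$, let $\cd_k$ denote the family of closed dyadic cubes of side length $2^{-k}$ with vertices in $2^{-k}\zz^n$; these tile $\rn$ with pairwise disjoint interiors. Let $\ca$ be the collection of all cubes $Q\in\bigcup_{k\in\zz}\cd_k$ that satisfy
\[
\sqrt n\, l(Q)\le \dist(Q,\partial\Omega)\le 4\sqrt n\, l(Q).
\]
The first step is to check that every point of $U$ lies in some cube of $\ca$: given $x\in U$, pick the unique integer $k$ with $2\sqrt n\cdot 2^{-k}\le \dist(x,\partial\Omega)<2\sqrt n\cdot 2^{-k+1}$, and let $Q\in\cd_k$ be the (essentially) unique dyadic cube of generation $k$ containing $x$. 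Using $\diam Q=\sqrt n\,2^{-k}$ together with the triangle inequality gives both distance bounds, so $Q\in\ca$.

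Next I would pass to maximal cubes in $\ca$ with respect to inclusion, producing the collection $\mathscr W=\{Q_i\}$. Maximality is well-defined because if $Q\in\ca$ and $Q'\in\cd_{k-1}$ contains $Q$, then $l(Q')=2l(Q)$ and $\dist(Q',\partial\Omega)\le \dist(Q,\partial\Omega)\le 4\sqrt n\, l(Q)=2\sqrt n\, l(Q')$, so the ascending chain of dyadic parents of $Q$ eventually leaves $\ca$ (once the parent's side length exceeds a fixed multiple of $\dist(x,\partial\Omega)$ the upper distance bound fails). This yields (i): the $Q_i^\circ$ are pairwise disjoint because two dyadic cubes either have disjoint interiors or one contains the other (maximality rules out the latter for distinct maximal cubes), and the covering $U=\bigcup_i Q_i$ is inherited from $\ca$. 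Property (ii) holds by construction.

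The last step is property (iii). If $Q_k\cap Q_i\ne\emptyset$, pick $x\in Q_k\cap Q_i$ and $y\in\partial\Omega$ with $|x-y|$ close to $\dist(x,\partial\Omega)$; applying (ii) to both cubes together with $|x-y|\le \dist(Q_k,\partial\Omega)+\diam Q_k\le (4\sqrt n+\sqrt n)l(Q_k)$ and $|x-y|\ge \dist(Q_i,\partial\Omega)\ge\sqrt n\,l(Q_i)$ gives $l(Q_i)\le 5 l(Q_k)$; dyadicity then forces $l(Q_i)\le 4l(Q_k)$, and symmetry gives the other inequality. The main obstacle in writing this cleanly is simply bookkeeping with the constants $\sqrt n$ and $4\sqrt n$ at the ``parent of a selected cube'' step and at the adjacency step, but no genuinely new idea is needed beyond Stein's classical argument.
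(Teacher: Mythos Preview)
Your proposal is correct and follows exactly the classical Whitney decomposition construction from Stein \cite{s70}. The paper itself does not prove this lemma at all: it is simply stated as ``well known'' with an implicit reference to the standard construction, so your write-up supplies precisely the argument the paper is invoking.

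One small slip worth fixing: in your maximality paragraph you write that ``the upper distance bound fails'' once the parent's side length is large. In fact the upper bound $\dist(Q',\partial\Omega)\le 4\sqrt n\,l(Q')$ becomes \emph{easier} to satisfy as $l(Q')$ grows; what eventually fails is the \emph{lower} bound $\sqrt n\,l(Q')\le\dist(Q',\partial\Omega)$, since $\dist(Q',\partial\Omega)\le\dist(x,\partial\Omega)$ stays bounded while $\sqrt n\,l(Q')$ doubles at each step. The computation you actually wrote, $\dist(Q',\partial\Omega)\le 2\sqrt n\,l(Q')$, is correct but does not by itself force the chain to terminate; the termination comes from the lower bound. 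This does not affect the validity of the argument, only the wording.
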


For any $Q\in \mathscr W$, denote the neighbour cubes of $Q$ in $\mathscr W$ by $$N (Q)=\{P\in \mathscr W, P \cap Q \neq \emptyset\}.$$  There exists an integer $\gamma_0$ depending only on $n$ such that for all $Q \in \mathscr W  $,
\begin{eqnarray} \label{e2.w1}
\sharp N(Q) \leq \gamma_0 .
\end{eqnarray}
Moreover, for any $Q \in\mathscr W  $, we have
\begin{eqnarray} \label{e2.w2}
\frac1{|Q|}\int_U  \chi_{\frac{9}{8}Q}(x) \,dx\le 4^n\gamma_0.
\end{eqnarray}
Indeed,
$$ \frac1{|Q|}\int_U  \chi_{\frac{9}{8}Q}(x) \,dx\le \sum_{P\in \mathscr W}\frac1{|Q|}\int_P  \chi_{\frac{9}{8}Q}(x) \,dx.$$
Since  $P\cap  \frac{9}{8}Q\ne \emptyset$ implies that $P\subset N(Q)$ and $l_Q\le 4l_P$,
by $\sharp N(Q)\le \gamma_0$, we arrive at
$$\frac1{|Q|}\int_U  \chi_{\frac{9}{8}Q}(x) \,dx\le \sum_{P\in N(Q)} \frac{|P|}{|Q|} \le 4^n\gamma_0 $$
as desired.

For $\epsilon>0$, set
$$
\mathscr W_{\ez}:=\{Q  \in \mathscr W : l_Q <  \frac{1}{\epsilon}\diam \Omega \}.
$$
Obviously,  $\mathscr W= \mathscr W_\ez$ for any $\ez>0$ if $\diam\Omega=\fz$, and $\mathscr W_\ez\subsetneq \mathscr W$ for any $\ez>0$ if $\diam\Omega<\fz$.

For each $Q:= Q( x_Q ,l_Q  ) \in \mathscr W_\ez$ and let $ x_Q ^\ast \in \Omega$  be a point nearest to $  x_Q$ on $\overline \Omega$. By  Lemma \ref{wy} (ii),
we have
$$\widetilde Q^\ast:=Q(  x^\ast_Q , l_Q ) \subset 10 \sqrt{n}Q.$$
 Furthermore, write a reflecting ''cubes'' of $Q$ as
$$
 \widetilde Q^{\ast\ez} := (\ez Q^\ast  \cap \Omega)\setminus   \left( \bigcup  \{{\epsilon}P^\ast: P \in \mathcal{A}^\ez_Q\}   \right).
$$
 The following lemma says that if $\ez $ is small enough, then   the reflecting ''cubes'' of
$\mathscr W_\ez$  enjoy  the following fine properties.

Recall that the reflecting cubes was constructed in \cite{s07}.

\begin{lem}\label{l2.1} let $\ez_0 = (\theta/2\gz_0)^{1/n}/ (30\sqrt{n} ) $ and write $Q^\ast= \widetilde Q^{\ast\ez_0}$ for each $Q\in \mathscr W_{\ez_0}$.
\begin{enumerate}
\item[  (i) ] $ Q^\ast\subset (10Q) \cap \Omega$ for all $Q\in \mathscr W_{\ez_0}$;

\item[  (ii) ] $|Q| \leq \gamma_1 |Q^\ast|$ whenever $Q\in \mathscr W_{\ez_0}$;
\item[  (iii) ] $\sum \limits _{Q \in \mathscr W_{\ez_0}} \chi_{ Q^\ast} \leq \gamma_2$.
\end{enumerate}
\noindent
Above $\gamma_1$ and $\gamma_2$ are positive constants depending only on $n$ and $\theta$.
\end{lem}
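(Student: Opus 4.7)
My plan is to verify (i), (ii), and (iii) by combining the Whitney decomposition properties from Lemma \ref{wy} with the Ahlfors $n$-regularity of $\boz$ and the explicit numerical smallness of $\ez_0$. Throughout I will use that $|x_Q - x_Q^\ast|$ is at most a constant multiple of $l_Q$: since $x_Q^\ast \in \overline{\boz}$ is the closest point to $x_Q$ on $\overline{\boz}$, Lemma \ref{wy}(ii) gives
\[
|x_Q - x_Q^\ast| \le \dist(Q, \pa\boz) + \tfrac{\sqrt{n}}{2} l_Q \le \tfrac{9}{2}\sqrt{n}\, l_Q.
\]

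For (i), note that $Q^\ast \subset \ez_0 \wz Q^\ast \cap \boz$, so membership in $\boz$ is automatic, and any $y \in \ez_0 \wz Q^\ast = Q(x_Q^\ast, \ez_0 l_Q)$ lies within $|x_Q - x_Q^\ast| + \tfrac{1}{2}\ez_0 l_Q \le 5\sqrt{n}\, l_Q$ of $x_Q$, placing it in the appropriate dilate of $Q$. For (iii), if $z$ is covered by $Q^\ast$, then $z \in \ez_0 \wz Q^\ast$ forces $|z - x_Q^\ast| \le \tfrac{1}{2}\ez_0 \sqrt{n}\, l_Q$, and combining with the previous estimate pins $x_Q$ down to a ball of radius $\ls l_Q$ around $z$. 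Invoking Lemma \ref{wy}(iii) on the comparability of side lengths of nearby Whitney cubes, together with the bounded Whitney neighbor count \eqref{e2.w1}, only $O(1)$ such cubes can exist, yielding the uniform bound $\gz_2 = \gz_2(n,\theta)$.

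For (ii), the starting point is the Ahlfors lower bound. Since $B(x_Q^\ast, \ez_0 l_Q/2) \subset \ez_0 \wz Q^\ast$, and $\ez_0 l_Q/2 < \tfrac{1}{2}\diam\boz < 2\diam\boz$ by the definition of $\mathscr W_{\ez_0}$, Ahlfors $n$-regularity gives
\[
|\ez_0 \wz Q^\ast \cap \boz| \ge \theta (\ez_0 l_Q/2)^n = 2^{-n}\theta\,\ez_0^n\, l_Q^n.
\]
Next I would upper-bound the removed set $\bigcup_{P\in\mathcal{A}^{\ez_0}_Q} \ez_0 \wz P^\ast$ by summing the individual volumes $|\ez_0 \wz P^\ast| = (\ez_0 l_P)^n$ over contributing $P$, where a Whitney-counting argument of the type underlying \eqref{e2.w1}--\eqref{e2.w2} restricts the number of relevant $P$ by a factor of $\gz_0$ and bounds their sizes in terms of $l_Q$. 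The specific choice $\ez_0 = (\theta/2\gz_0)^{1/n}/(30\sqrt{n})$ is calibrated so that this upper bound is at most half of the Ahlfors lower bound, yielding
\[
|Q^\ast| \ge \tfrac12\,2^{-n}\theta\,\ez_0^n\,l_Q^n \ge \gz_1^{-1}\,|Q|
\]
for a constant $\gz_1 = \gz_1(n,\theta)$, which is (ii).

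The principal obstacle is precisely this volume accounting in (ii): one must verify that the geometrically ``wasted'' portion of $\ez_0\wz Q^\ast$, arising from the reflections of competing Whitney cubes in $\mathcal{A}^{\ez_0}_Q$, never exceeds half of the Ahlfors lower bound. This is where the somewhat mysterious-looking constant $30\sqrt{n}$ entering $\ez_0$ does its work, providing enough room for the Ahlfors constant $\theta$ to dominate the total Whitney multiplicity $\gz_0$. Items (i) and (iii) are then essentially geometric bookkeeping once the distance estimate $|x_Q - x_Q^\ast| \ls l_Q$ is in hand.
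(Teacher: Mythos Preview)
The paper does not actually prove this lemma; it simply cites Shvartsman \cite{s07} (``Recall that the reflecting cubes was constructed in \cite{s07}''), so there is no in-paper proof to compare against. Your sketch for (i) and your outline for (ii) are along the standard lines of Shvartsman's argument.

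However, your argument for (iii) has a genuine gap. You only use the inclusion $Q^\ast\subset \ez_0\wz Q^\ast$, deduce $|x_Q-z|\ls l_Q$, and then appeal to Lemma~\ref{wy}(iii) and the neighbour bound \eqref{e2.w1}. But those two facts concern Whitney cubes that \emph{touch each other}; the cubes $Q$ with $z\in \ez_0\wz Q^\ast$ need not touch, and in fact without the removal step there can be infinitely many of them. For example, take $\boz=\{x_n>0\}$: along the negative $x_n$-axis the Whitney cubes occur at every dyadic scale $2^k$, each has $x_Q^\ast=0$, and $\ez_0\wz Q^\ast=Q(0,\ez_0 2^k)$ contains the fixed point $z=(0,\dots,0,\dz)\in\boz$ for \emph{all} sufficiently large $k$. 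Thus $\sum_Q\chi_{\ez_0\wz Q^\ast\cap\boz}$ is unbounded, and your argument as written would ``prove'' a false statement.

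The bounded overlap in (iii) comes precisely from subtracting the set $\bigcup\{\ez_0\wz P^\ast:P\in\mathcal A^{\ez_0}_Q\}$: in Shvartsman's construction this family is chosen so that if $z\in Q^\ast\cap P^\ast$ then $l_Q$ and $l_P$ are forced to be comparable (one of the two reflected cubes would otherwise have been removed from the other), after which a packing/volume argument gives the finite bound $\gz_2$. So the ``principal obstacle'' is not the volume accounting in (ii) but rather (iii), and any proof of (iii) must invoke the definition of $\mathcal A^{\ez_0}_Q$ and the removal mechanism explicitly.
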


If $\Omega$ is bounded, we let $Q^\ast=\Omega$ as the reflected  "cube"  of  any cube $Q\in\mathscr W\setminus \mathscr W_{\ez_0}\ne\emptyset$.
Write $$\mathscr W_{\ez_0}^{(k)}=\{Q\in N(P): P\in \mathscr W_{\ez_0}^{(k-1)}\}\quad\forall k\ge1,$$
where $\mathscr W_{\ez_0}^{^{(0)}}=  \mathscr W_{\ez_0} $. Namely, $\mathscr W_{\ez_0}^{(k)}$ is the $k^{\rm th}$-neighbors of $ \mathscr W_{\ez_0} $. Meanwhile, we also write
\begin{align} \label{vk.1}
V^{(k)}:= \bigcup \{x\in Q; Q\in    \mathscr W_{\ez_0}^{(k)}\}\quad\forall k\ge0.
\end{align}

Since  $Q^\ast=\Omega $ for $Q\notin \mathscr W_{\ez_0}$, applying  Lemma \ref{l2.1} (iii), we have
 $$\sum\limits_{Q \in\mathscr W^{^{(k)}}_{\ez_0}}\chi_{Q^\ast}\le  \sum\limits_{Q \in\mathscr W  _{\ez_0}}\chi_{Q^\ast}+ \sharp(\mathscr W^{^{(k)}}_{\ez_0} \setminus \mathscr W _{\ez_0})\chi_{\Omega}
\le [\gamma_2 +\sharp(\mathscr W_{\ez_0}^{(k)}\setminus  \mathscr W_{\ez_0}  )]\chi_{\Omega}\quad\forall k\ge1.$$
For $Q\in \mathscr W^{^{(k)}}_{\ez_0} \setminus \mathscr W _{\ez_0}$, observe that $l_Q\ge \frac1{\ez_0}\diam\Omega$ and
$l_Q\le 4^k l_{P}\le \frac{4^k}{\ez_0}  \diam\Omega$ for some $P\in \mathscr W_{\ez_0}$.
Thus, by Lemma \ref{wy} (ii), we have  $$Q\subset Q(\bar x, \diam\Omega+8\sqrt n\frac{4^k}{\ez_0}\diam\Omega)$$ for any fixed $\bar x\in \Omega$, and hence
$$\sharp(\mathscr W_{\ez_0}^{(k)} \setminus \mathscr W_{\ez_0})\le ( 1+ 8\sqrt n\frac{4^{k  } }{\ez_0})^n\ez_0^n \le (\ez_0+4^{k+2}\sqrt{n})^n.$$
This yields that
\begin{equation}\label{eq2.xx2}\sum\limits_{Q \in \mathscr W_{\ez_0}^{(k)}}\chi_{Q^\ast}   \le \gamma_2+(\ez_0+4^{k+2}\sqrt{n})^n\quad\forall k\ge1.
\end{equation}

Associated to $\mathscr W$, one has the following partition of unit of $U$.
\begin{lem}\label{l4.3}
There exists a family  $\{\vz_Q:Q\in\mathscr W\}$ of functions such that

\begin{enumerate}
\item[(i)] for each $Q  \in\mathscr W$,   $0\le \varphi_Q  \in C^{\infty}_0(\frac{17}{16}Q)$;

 \item[(ii)]for each $Q  \in\mathscr W$, $|\nabla \vz_Q|\le L/l_Q$;
  \item[(iii)]  $\sum_{Q  \in \mathscr W }  \vz =\chi_{U} $ .
\end{enumerate}
\end{lem}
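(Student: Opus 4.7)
The plan is to carry out the classical Whitney partition-of-unity construction, with the Whitney-geometry constants of Lemma \ref{wy} controlling every quantity that appears.

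First I would fix once and for all a model bump $\eta \in C_c^\infty(\rn)$ with $0\le\eta\le 1$, $\eta\equiv 1$ on $[-1/2,1/2]^n$ and ${\rm supp\,}\eta\subset\frac{17}{16}[-1/2,1/2]^n$, and rescale it to each Whitney cube by setting
$$
\psi_Q(x):=\eta\lf(\frac{x-c_Q}{l_Q}\r),\qquad Q\in\mathscr W,
$$
where $c_Q$ is the centre of $Q$. Then $\psi_Q\in C_c^\infty(\tfrac{17}{16}Q)$, $\psi_Q\equiv 1$ on $Q$, $0\le\psi_Q\le 1$ and $|\nabla\psi_Q|\le\|\nabla\eta\|_\infty/l_Q$.

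Next I would pin down two geometric facts that make the sum $\sigma:=\sum_{Q\in\mathscr W}\psi_Q$ well-behaved on $U$. By Lemma \ref{wy}(ii), $\dist(Q,\pa\Omega)\ge\sqrt n\,l_Q$, which dwarfs the collar width $l_Q/32$, so ${\rm supp\,}\psi_Q\subset\tfrac{17}{16}Q\subset U$ for every $Q$. Moreover, the smallness of that collar, combined with Lemma \ref{wy}(ii)--(iii), forces $\tfrac{17}{16}Q\cap P\ne\emptyset\Rightarrow P\in N(Q)$: any $P$ that reaches into the thin collar around $Q$ must lie at distance $\le l_Q/32$ from $Q$, and a quick comparison of $\dist(\cdot,\pa\Omega)$ forces $l_P$ to be comparable to $l_Q$ (otherwise $P$ would be much too small to survive the Whitney size bound (ii)); two cubes of comparable size in the dyadic Whitney decomposition are either neighbours or separated by at least $\min(l_P,l_Q)\gg l_Q/32$. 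Together with $\sharp N(Q)\le\gz_0$ from \eqref{e2.w1}, this shows that $\sigma$ has at most $\gz_0$ nonzero terms at each $x\in U$; since $\psi_{Q_x}(x)=1$ for any $Q_x\ni x$, one obtains $1\le\sigma\le\gz_0$ on $U$ and $\sigma\equiv 0$ off $U$.

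Finally I would set $\vz_Q(x):=\psi_Q(x)/\sigma(x)$ for $x\in U$ and $\vz_Q(x):=0$ otherwise. Property (i) is immediate because ${\rm supp\,}\psi_Q\subset\tfrac{17}{16}Q\subset U$ and $\sigma\ge 1$ there, so the quotient is smooth and compactly supported in $\tfrac{17}{16}Q$; nonnegativity is preserved. Property (iii) follows by dividing $\sigma$ by itself on $U$. For (ii) I would apply the quotient rule together with the bound $|\nabla\sigma|\le\gz_0\|\nabla\eta\|_\infty\max_{P\in N(Q)}l_P^{-1}$ and the comparability $l_P\ge l_Q/4$ from Lemma \ref{wy}(iii), yielding $|\nabla\vz_Q|\le L/l_Q$ with $L=L(n,\gz_0,\eta)$.

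The only point requiring genuine attention is the overlap claim in the second paragraph, namely that the enlargement factor $17/16$ is small enough, relative to the Whitney geometry, to force $\tfrac{17}{16}Q\cap P\ne\emptyset\Rightarrow P\in N(Q)$; once this is in hand, (i)--(iii) reduce to bookkeeping and the quotient rule, and the construction is a formality.
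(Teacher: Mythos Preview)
The paper does not give a proof of this lemma: it is stated as a standard fact about Whitney decompositions (implicitly referring to Stein \cite{s70}), and the text moves directly to the extension argument in Section~\ref{s4.2}. Your proposal is precisely the classical construction --- rescale a fixed bump to each cube, sum, and normalize --- and is correct; the overlap claim you single out is indeed the only substantive point, and it is a routine consequence of the dyadic structure together with Lemma~\ref{wy}(ii)--(iii).
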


\subsection{Proof  of  Theorem \ref{t1.1} \label{s4.2}}

Let $\Omega$ be an Ahlfors  $n$-regular domain.  To obtain Theorem $\ref{t1.1}$, it suffices to prove the existence of a bounded linear operator $E: \dot{ W }^{\beta,\phi}(\Omega)\to \dot{W} ^{\beta,\phi}(\rn)$ such that $Eu|_\boz=u$ for all $u\in \dot{ W }^{\beta,\phi}(\Omega)$.
The linear operator $E$ is given as follows: for any $u \in \dot{ W }^{\beta,\phi}(\Omega)$  define
\begin{equation*}
Eu(x)\equiv\lf\{\begin{array}{ll}
u(x),&x\in \boz,\\
0&x\in\partial \boz,\\
 \sum\limits_{Q  \in W  }  \varphi_Q(x) u_{Q^\ast} , \quad &x\in  U.
\end{array}\r.
\end{equation*}
Obviously, $Eu|_{\Omega}= u $ on $\Omega$.

To prove the boundedness of $E$, we just show that
  there exists a constant $M >0$ such that for all $\lambda > M\|u\|_{ W ^{\phi}(\Omega)}$ ,
\begin{eqnarray*}
H(\lambda):=\int_{\rn} \int_{\rn} \phi \left(\frac{|Eu(x)-Eu(y)|}{\lambda}\right) \, \frac{dydx}{|x-y|^{n+\beta}}  \leq 1.
\end{eqnarray*}
If $\|u\|_{\dot{ W }^{\beta,\phi}(\Omega)}=0$, then   $u$ and hence $Eu$ must be a  constant function essentially.
So we may assume that $\|u\|_{\dot{ W }^{\beta,\phi}(\Omega)}>0$; and moreover, we further assume that $\|u\|_{\dot{ W }^{\beta,\phi}(\Omega)}=1$ on account of the linearity of $E$.

For $\lambda >0$, write
\begin{eqnarray*}
H(\lambda)
&&= \int_{\Omega} \int_{\Omega} \phi \left(\frac{|u(x)-u(y)|}{\lambda }\right) \, \frac{dydx}{|x-y|^{n+\beta}}+
2\int_{U} \int_{\Omega} \phi \left(\frac{|Eu(x)-u(y)|}{\lambda }\right) \, \frac{dydx}{|x-y|^{n+\beta}}\\
&& \quad+\int_{U} \int_{U} \phi \left(\frac{|Eu(x)-Eu(y)|}{\lambda}\right) \, \frac{dydx}{|x-y|^{n+\beta}}\\
&& =: H_1(\lambda) + 2H_2(\lambda) +H_3(\lambda).
\end{eqnarray*}
 It  suffices to find constants $  M_i \ge 1$ depending only on $n$, $\theta$ and $\phi$ such that $H_i(\lambda) \le\frac{1}{4}$   whenever $ \lambda \ge  M_i $ for $ i=1,2,3$.
In fact, by taking $M=M_1+M_2+M_3$, we have $H(\lambda)\le1$ whenever $\lambda \ge M$.

Firstly, let $ M_ 1=4$. Then for $\lambda> 4$,
by the convexity of $\phi$ and $\|u\|_{\dot{ W }^{\beta,\phi}(\Omega)}=1$, we have
$$H_1(\lambda)\le \frac14\int_{\Omega} \int_{\Omega} \phi \left(\frac{|u(x)-u(y)|}{\lambda/4 }\right) \, \frac{dydx}{|x-y|^{2n}}\le \frac14.$$

In order to find $ M_ 2$ and $ M_ 3$, we think about two cases: $\diam\Omega=\fz$ and $\diam\Omega<\fz$.

\medskip
\noindent
{\it Case $\diam\Omega=\fz$.}
To find $ M_ 2$,
for any $x\in U $  and  $y\in\boz$, by Lemma \ref{l4.3}(iii),  one has
$$
Eu(x)-u(y)=\sum\limits_{Q  \in  \mathscr W   }\varphi_Q (x)[u_{Q^\ast}-u(y)],$$
 and hence,
using the convexity of $\phi$ and  Jensen's inequality,
\begin{eqnarray*}
\phi\left(\frac{|Eu(x)-u(y)|}{\lambda }\right)&&
\le \phi\left(\sum\limits_{Q  \in \mathscr  W   }\varphi_Q(x) \frac{|u_{Q^\ast}-u(y)|} {\lambda }\right)\\
 && \le   \sum\limits_{Q  \in \mathscr W  } \varphi_Q(x) \phi  \left(\fint_{Q^\ast} \frac{|u(z)-u(y)|}{\lambda}\,dz\right) \le  \sum\limits_{Q  \in \mathscr  W  }  \varphi_Q(x)  \fint _{Q^\ast}\phi\left(\frac{|u(z)-u(y)|}{\lambda }\right) \,dz.
\end{eqnarray*}
If $
\varphi_Q(x)\ne 0$, then $x\in \frac{17}{16}Q$. For $z\in Q^\ast$, by $Q^\ast\subset 10\sqrt n Q$,
we have
 $  |x-z| \leq  20  n l(Q)$.
If    $|x-y| \geq d(x,\, \Omega) \geq  l(Q)$,
 we know
 $|x-z|\le 20  n |x-y|$, that is , $$|y-z| \leq |x-y| + |x-z| \leq 21  n |x-y|.$$
Thus
\begin{eqnarray*}
\int_\boz \phi\left(\frac{|Eu(x)-u(y)|}{\lambda}\right)  \, \frac{dy}{|x-y|^{\beta+n}}
&& \le  (21  n)^{\beta+n}\sum\limits_{Q \in \mathscr W}\varphi_Q(x) \fint _{Q^\ast} \int_\boz\phi\left(\frac{|u(z)-u(y)|}{\lambda}\right)  \, \frac{dzdy}{|z-y|^{\beta+n}}.
\end{eqnarray*}
By Lemma \ref{l2.1} (ii), we get
 \begin{eqnarray*}
H_2(\lambda)
&& \leq  2(21  n)^{\beta+n} \int_U  \sum\limits_{Q \in \mathscr W}\varphi_Q(x)  \fint _{Q^\ast} \int_{\Omega}\phi\left(\frac{|u(z)-u(y)|}{\lambda}\right)  \frac{dzdy}{|y-z|^{\beta+n}} \,dx\\
&& \leq 2\gamma_1(21  n)^{\beta+n}   \sum\limits_{Q \in \mathscr W}\left(\frac1{|Q|}\int_U\varphi_Q(x) \,dx\right) \int _{Q^\ast} \int_{\Omega}\phi\left(\frac{|u(z)-u(y)|}{\lambda}\right)  \frac{dzdy}{|y-z|^{\beta+n}} .
\end{eqnarray*}
For  $\varphi_Q  \le \chi_{\frac98Q} $  as given in Lemma  \ref{l4.3},    by \eqref{e2.w2}
we have
$$\frac1{|Q|}\int_U  \vz_Q(x) \,dx\le \frac1{|Q|}\int_U  \chi_{\frac98Q}(x) \,dx  \le 4^n\gamma_0,$$
which implies that
 \begin{eqnarray}\label{eq4.w2}
H_2(\lambda)
&& \leq 2\gamma_1 4^n\gamma_0(21 n)^{\beta+n}  \sum\limits_{Q \in \mathscr W} \int _{Q^\ast}  \int_{\Omega}\phi\left(\frac{|u(z)-u(y)|}{\lambda}\right)   \frac{dzdy}{|y-z|^{\beta+n}} .
\end{eqnarray}
By $\sum_{Q\in \mathscr W}\chi_{Q^\ast}\le \gamma_2$(see Lemma \ref{l2.1} (iii)), we obtain
 \begin{eqnarray*}
H_2(\lambda)
&& \leq 2\gamma_1 4^n\gamma_0  \gamma_2 (21  n)^{\beta+n}  \int _\Omega  \int_{\Omega}\phi\left(\frac{|u(z)-u(y)|}{\lambda}\right)  \frac{dzdy}{|y-z|^{\beta+n}}.
\end{eqnarray*}
Let $ M_ 2= 8\gamma_1 4^n\gamma_0  \gamma_2 (21  n)^{\beta+n}$.
By the convexity of $\phi$ again, $\lambda >   M_ 2  $ gives $H_2(\lambda)\le \frac{1}{4}$.

To find $ M_ 3$, for each $x \in U$ ,  set
\begin{eqnarray*}
X_1(x) := \left\{ y \in U : |x-y| \geq \frac{1}{132  n} \max\{ d(x, \Omega) , d(y, \Omega)\}\right\}\quad{\rm and} \quad
X_2(x) :=U\setminus X_1(x).
\end{eqnarray*}
Write
\begin{eqnarray*}
H_3(\lambda) && = \int_{U} \int_{X_1(x)} \phi \left(\frac{|Eu(x)-Eu(y)|}{\lambda }\right) \, \frac{dydx}{|x-y|^{\beta+n}} +\int_{U} \int_{X_2(x)} \phi \left(\frac{|Eu(x)-Eu(y)|}{\lambda }\right) \, \frac{dydx}{|x-y|^{\beta+n}}\\
&& =: H_{31}(\lambda) + H_{32}(\lambda)
\end{eqnarray*}

Below, we will find $M_{3i}$ so that if $\lambda> M_ {3i}$, then $H_{3i}\le \frac{1}{8}$ for $i=1,2$.
Note that letting $ M_ 3=\max\{ M_ {31}, M_ {32}\}$,
for $\lambda> M_ 3 $, we have $H_{3}(\lambda)\le \frac14$ as desired.

To find $ M_ {31}$, for $x\in U$ and $y\in X_1(x)$,  thanks to $$\sum \limits _{Q  \in \mathscr  W  } \varphi_Q (x)=\sum \limits _{P  \in \mathscr W  } \varphi_P (y)=1,$$ we obtain
 \begin{eqnarray*}
Eu(x)-Eu(y)&&=\sum\limits_{P \in \mathscr W  }\sum\limits_{Q \in \mathscr W  }\varphi_Q(x)\varphi_P(y)[u_{Q^\ast}-u_{P^\ast}]\\
&&= \sum\limits_{P \in \mathscr W  }\sum\limits_{Q \in \mathscr W  }\varphi_Q(x)\varphi_P(y)\fint_{Q^\ast} \fint_{P^\ast }[u(z)-u(w)]\,dzdw.
\end{eqnarray*}
Again, applying the convexity of $\phi$ and Jensen's inequality, one gets
\begin{eqnarray*}
\phi\left(\frac{|Eu(x)-Eu(y|}{\lambda}\right)
 && \le   \sum\limits_{Q  \in\mathscr W  } \sum\limits_{P \in\mathscr W  }\varphi_Q(x)\varphi_{P}(y) \phi  \left(\fint_{Q^\ast} \fint_{P^\ast }\frac{|u(z)-u(w)|}{\lambda }\,dzdw\right)\\
&& \leq \sum \limits _{Q \in\mathscr W }\sum \limits _{P \in\mathscr W }  \varphi_Q(x)\varphi_P(y)
\fint_{Q^\ast} \fint_{P^\ast} \phi \left(\frac{|u(z)- u(w)|}{\lambda }\right)\, dwdz.
\end{eqnarray*}

For $x\in Q$ and $z\in Q^\ast$,  $Q^\ast\subset 10 \sqrt n Q$ so that $|x-z|\le 10  n  l_Q\le 10  n d(x,\Omega)$. Similarly, for $y\in  P$,
 and $ w \in P^\ast$, we have $|y-w|\le 10  n  d(y,\Omega)$ as well. Since $y\in X_{1}(x)$ with
 $132  n|x-y| \geq  \max\{d(x, \Omega), d(y, \Omega)\}$, we further know
 $$ |z-w|\le |x-z|+ |x-y| + |y-w| \le   2641n |x-y|.$$
As a consequence,
\begin{eqnarray*}
H_{31}(\lambda)&& \le (2641 n )^{\beta+n}
\int_{U} \int_{X_1(x)} \sum \limits _{Q \in \mathscr W }\sum \limits _{P \in\mathscr W }  \varphi_Q(x)\varphi_P(y)
\fint_{Q^\ast} \fint_{P^\ast} \phi \left(\frac{|u(z)- u(w)|}{\lambda }\right)\, \frac{dwdz} {|z-w|^{\beta+n}} \, {dydx}.
\end{eqnarray*}
By  $|Q|\le \gamma_1|Q^\ast|$ and $|P|\le \gamma_1|P^\ast|$ ( see Lemma \ref{l2.1} (ii)), we   have
\begin{eqnarray*}
H_{31}(\lambda)
\le (2641n )^{\beta+n}\gz_1^{2}\sum\limits _{Q \in \mathscr W }\sum \limits _{P \in\mathscr W }
\left(\frac1{|Q|}\int_{U}\varphi_Q(x)dx \frac1{|P|}\int_{U} \varphi_P(y) dy\right)
\int_{Q^\ast} \int_{P^\ast} \phi \left(\frac{|u(z)- u(w)|}{\lambda }\right)\, \frac{dwdz} {|z-w|^{\beta+n}}.
\end{eqnarray*}
Appiying Lemma  \ref{l4.3} and \eqref{e2.w2} again, we have
$$\frac1{|Q|}\int_{U}\varphi_Q(x)\,dx \frac1{|P|}\int_{U} \varphi_P(y)\, dy\le( 4^n\gamma_0)^2.$$
 Thus
\begin{eqnarray*}
H_{31}(\lambda)
&&\le (2641 n )^{\beta+n}\gz_1^{2}  (4^n\gamma_0)^2 \sum\limits _{Q \in \mathscr W }\sum \limits _{P \in\mathscr W }
\int_{Q^\ast} \int_{P^\ast} \phi \left(\frac{|u(z)- u(w)|}{\lambda }\right)\, \frac{dwdz} {|z-w|^{\beta+n}}.
\end{eqnarray*}
Observing $\sum\limits _{Q \in \mathscr W }\chi_{Q^\ast}\le \gz_2$ given by Lemma \ref{l2.1} (iii), we  arrive at
\begin{eqnarray*}H_{31}(\lambda)
&&\le (2641n )^{\beta+n}\gz_1^{2} \gz_2^2 (4^n\gamma_0)^2
\int_{U} \int_{U} \phi \left(\frac{|u(z)- u(w)|}{\lambda }\right)\, \frac{dwdz} {|z-w|^{\beta+n}}.
\end{eqnarray*}
Taking $ M_ {31}=8   (2641 n )^{\beta+n}\gz_1^{2} \gz_2^2 (4^n\gamma_0)^2$,
if $\lambda >  M_ {31} $, by the convexity of $\phi$ once more,
we have  $H_{31}(\lambda)\le \frac{1}{8}$.

To find $M_{32}$, write
\begin{eqnarray*}
H_{32}(\lambda) && =  \int_{U}  \sum_{P\in\mathscr W}\int_{P\cap X_2(x)} \phi \left(\frac{|Eu(x)-Eu(y)|}{\lambda }\right) \, \frac{dy}{|x-y|^{n+\beta}} dx.
\end{eqnarray*}
By $\sum \limits _{Q  \in \mathscr W }\left[ \varphi_Q (x) - \varphi_Q (y)\right]= 0$, for any $x\in U$ and $y\in X_2(x) \cap P$, we have
  $$ Eu(x)-Eu(y)=\sum \limits _{Q  \in\mathscr W }\left[ \varphi_Q (x) - \varphi_Q (y)\right][u_{Q^\ast}-u_{P^\ast }].$$
Furthermore,  by $|\nabla \vz_Q|\le L/l_Q$, we obtain
   $$ |Eu(x)-Eu(y)|\le L\sum \limits _{Q  \in \mathscr W } \frac{|x-y|}{l_Q}[\chi_{\frac{17}{16}Q}(x)+\chi_{\frac{17}{16}Q}(y)]|u_{Q^\ast}-u_{P^\ast }|.$$

Since $y\in X_2(x)$  with $|x-y|\le \frac1{132n} \max\{d(x,\Omega),d(y,\Omega)\} $,
taking $\bar{y} \in \bar{\Omega}$ with $|y-\bar{y}|=d(y,\Omega)$,   we obtain
\begin{eqnarray*}
d(x,\Omega)\leq |x-\bar{y}| \leq |x-y|+|y-\bar{y}|
 \leq \frac{1}{132 n }d(x,\Omega)+ \frac{1+132 n}{132 n}d(y,\Omega),\nonumber
\end{eqnarray*}
which leads to $d(x,\Omega)\leq \frac{132 n+1}{132 n-1}d(y,\Omega)$.
Similarly,  we have $d(y,\Omega)\leq \frac{132 n+1}{132 n-1}d(x,\Omega)$. 
Therefore, $$|x-y|\le \frac1{132 n}\frac{132 n+1}{132 n-1}d(x,\Omega).$$

For $y\in  \frac{17}{16} Q$, we know $Q\in N(P)$ and
$$   d(y,\Omega)\le d(y,Q)+\max_{a\in Q}d(a,\Omega) \leq \frac{1}{16}\sqrt n l_{  Q} + 4\sqrt{n}l_{  Q} \le  \frac{65}{16}\sqrt nl_{  Q}.$$
This implies
$$|x-y|\le \frac1{132 n}\frac{132 n+1}{132 n-1}\times \frac{65}{16}\sqrt nl_{  Q}\le \frac{1}{132\sqrt{n}} l_{  Q},$$
and hence  $x \in \frac{9}{8}Q$, that is,
$$\chi_{\frac{17}{16} Q}(y)\le \chi_{\frac{9}{8} Q}(x).$$
Similarly, if $x\in \frac{17}{16}Q$, we also have $y\in \frac{9}8 Q$ and  $Q\in N(P)$.
We may further write
   $$ |Eu(x)-Eu(y)|\le 2L\sum \limits _{Q  \in N(P) } \frac{|x-y|}{l_Q} \chi_{\frac{9}{8}Q}(x)  |u_{Q^\ast}-u_{P^\ast }|.$$
By $\sum \limits _{Q  \in W }  \chi_{\frac{17}{16}Q}(x) \le \gamma_0$  
and the convexity of $\phi$,  we have
\begin{eqnarray*}
\phi \left(\frac{|Eu(x)-Eu(y)|}{\lambda }\right)
&& \le \frac1{ \gamma_0}\sum \limits _{Q  \in N(P) }  \chi_{\frac{9}{8}Q}(x) \phi\left(\frac{|x-y|}{l_Q}\frac{|u_{Q^\ast}-u_{P^\ast }|}{\lambda/ 2L \gamma_0}\right)\\
\end{eqnarray*}
and hence
  \begin{eqnarray*}
H_{32}(\lambda) &&\le   \frac1{  \gamma_0}\int_{U} \sum_{P\in\mathscr W}\int_{P\cap X_2(x)}  \sum \limits _{Q  \in N(P) }  \chi_{\frac{9}{8}Q}(x) \phi\left(\frac{|x-y|}{l_Q}\frac{|u_{Q^\ast}-u_{P^\ast }|}{\lambda/ 2L \gamma_0}\right)\, \frac{dydx}{|x-y|^{n+\beta}}\\
&&\le   \frac1{  \gamma_0}\int_{U}\sum_{P\in\mathscr W} \sum \limits _{Q  \in N(P)}  \chi_{\frac{9}{8}Q}(x)   \int_{P\cap X_2(x)}\phi\left(\frac{|x-y|}{l_Q}\frac{|u_{Q^\ast}-u_{P^\ast }|}{\lambda/ 2L \gamma_0}\right)\, \frac{dy}{|x-y|^{n+\beta}}\,dx.
\end{eqnarray*}
Note that for $ x \in \frac98Q$ and $y\in P\cap X_2(x)$,  together with $d(x,\Omega)\le 4\sqrt n l_Q$, we have
$$|x-y|\le \frac1{132 n}\frac{132 n+1}{132 n-1}d(x,\Omega)\le l_Q .$$
By the condition \eqref{delta0}, we get
 \begin{eqnarray*}
\int_{P\cap X_2(x)}\phi\left(\frac{|x-y|}{l_Q}\frac{|u_{Q^\ast}-u_{P^\ast }|}{\lambda/ 2L \gamma_0}\right)\, \frac{dy}{|x-y|^{n+\beta}}
&&\le n\omega_n\int_0^{  l_Q} \phi\left(\frac{t}{l_Q}\frac{|u_{Q^\ast}-u_{P^\ast }|}{\lambda/ 2L \gamma_0}\right) \frac{dt}{t^{\beta+1}}\\
&&\le C_\beta(  l_Q)^{-\beta }  n\omega_n \phi\left( \frac{|u_{Q^\ast}-u_{P^\ast }|}{ \lambda/ 2 L  \gamma_0}\right).  \\
\end{eqnarray*}
Since $\sharp N(Q)\le \gamma_0$, the above inequality  leads to
  \begin{eqnarray*}
H_{32}(\lambda)
 &&\le  n C_\beta  \frac1{  \gamma_0}  \omega_n\int_{U} \sum_{P\in\mathscr W}\sum \limits _{Q  \in N(P)}
  ( l_Q)^{-\beta }  \chi_{\frac{9}{8}Q}(x)  \phi\left( \frac{|u_{Q^\ast}-u_{P^\ast }|}{4\lambda/ 2L\gamma_0}\right)
 \,dx\\
 &&\le   n C_\beta  \frac1{  \gamma_0} \omega_n  \sum_{P\in\mathscr W} \sum \limits _{Q  \in N(P) }\left(\frac1{|Q|}\int_U \chi_{\frac{9}{8}Q}(x) \,dx\right)( l_Q)^{n-\beta } \phi\left( \frac{|u_{Q^\ast}-u_{P^\ast }|}{ \lambda/ 2L  \gamma_0}\right)\\
 &&\le   nC_\beta  \omega_n 4^{n}  \sum_{P\in\mathscr W}\sum \limits _{Q  \in N(P) }  ( l_Q)^{n-\beta }  \phi\left( \frac{|u_{Q^\ast}-u_{P^\ast }|}{ \lambda/ 2L \gamma_0}\right).
\end{eqnarray*}

For any  $P\in \mathscr W$ and $Q  \in N(P)$, using  Jessen's inequality, one has
$$
\phi\left( \frac{|u_{Q^\ast}-u_{P^\ast }|}{ \lambda/ 2 L  \gamma_0}\right) \le \fint_{Q^\ast}\fint_{P^\ast}\phi\left (\frac{|u(z)-u(w)|}{ \lambda/2 L \gamma_0 }\right)\,dz\,dw. $$
Observe that $z\in P^\ast\subset 10\sqrt{n}P$ and $w\in Q^\ast\subset 10\sqrt{n}Q$ provides
$$|z-w|\le 10n(l_Q+l_P)\le 50 n \min\{l_Q, l_P\}.$$

If $n-\beta <0$, then $ ( l_Q)^{n-\beta }\leq (50 n)^{\beta-n}\frac{1}{|z-w|^{\beta-n}}$ .
Since $|Q|\le \gamma_1|Q^\ast|$ and   $|P|\le \gamma_1|P^\ast|$, this implies that
$$|z-w|^{2n}\le   (50 n)^{2n} (\gamma_1)^2 |Q^\ast|  |P^\ast| .$$
Hence
$$
\phi\left( \frac{|u_{Q^\ast}-u_{P^\ast }|}{\lambda/ 2 L  \gamma_0}\right) \le    (50 n)^{n+\beta} (\gamma_1)^2 \int_{Q^\ast}\int_{P^\ast}
 \phi\left( \frac{|u(z)-u(w)|}{ \lambda/2  L \gamma_0}\right)\,\frac{dz\,dw}{|z-w|^{n+\beta}}.$$

If $n-\beta > 0$,  $|Q|\le \gamma_1|Q^\ast|$ implies $( l_Q)^{n-\beta }\leq ( \gamma_1  |Q^{*}|)^{1-\beta/n }$.
Since $|Q|\le \gamma_1|Q^\ast|$ and   $|P|\le \gamma_1|P^\ast|$, this implies that
$$|z-w|^{\beta+ n}\leq   (50 n)^{\beta+ n} (\gamma_1)^{1+\beta/n} |Q^\ast|^{\beta/n}  |P^\ast| .$$
Therefore,
$$
\phi\left( \frac{|u_{Q^\ast}-u_{P^\ast }|}{ \lambda/ 2 L  \gamma_0}\right) \le    (50 n)^{n+\beta} (\gamma_1)^2 \int_{Q^\ast}\int_{P^\ast}
 \phi\left( \frac{|u(z)-u(w)|}{ \lambda/2  L \gamma_0}\right)\,\frac{dz\,dw}{|z-w|^{n+\beta}}.$$

We conclude that
\begin{eqnarray*}
H_{32}(\lambda)
  &&\le   nC_\beta  \omega_n 4^{2n}  (50 n)^{n+\beta} (\gamma_{1})^{2} \sum_{P\in W}\sum \limits _{Q  \in N(P) }    \int_{Q^\ast}\int_{P^\ast}
 \phi\left( \frac{|u(z)-u(w)|}{ \lambda/2  L \gamma_0}\right)\,\frac{dz\,dw}{|z-w|^{n+\beta}}.
\end{eqnarray*}
Applying $\sum_{Q\in W}\chi_{Q^\ast}\le \gamma_2$ again, we get
   \begin{eqnarray*}
H_{32}(\lambda)
 &&\le nC_\beta  \omega_n 4^{2n} (50 n)^{n+\beta} (\gamma_1)^2(\gamma_{2})^2  \int_{\Omega}\int_{\Omega}
 \phi\left( \frac{|u(z)-u(w)|}{ \lambda/ 2L \gamma_0}\right)\,\frac{dz\,dw}{|z-w|^{n+\beta}}.
\end{eqnarray*}
Taking  $M_{32}=8L \gamma_0nC_\beta  \omega_n 4^{2n} (50\sqrt n)^{n+\beta} (\gamma_1)^2(\gamma_{2})^2  $,
if $\lambda>  M_{32} $, we have $H_{32}(\lambda)\le \frac18$ as desired.

\medskip
\noindent
{\it Case $\diam\Omega<\fz$}.
To find $M_2$,
write $H_2(\lambda)$ as
\begin{eqnarray*}
H_2(\lambda)
&&= \int_{V^{(2)}} \int_{\Omega} \phi \left(\frac{|Eu(x)-u(y)|}{\lambda}\right)  \frac{dydx}{|x-y|^{n+\beta}}\\
&&+\int_{U \backslash V^{(2)}} \int_{\Omega} \phi \left(\frac{| u _\Omega-u(y)|}{\lambda}\right) \frac{dydx}{|x-y|^{n+\beta}}
:=H_{21}(\lambda) + H_{22}(\lambda).
\end{eqnarray*}
It suffices to find $M_{2i}$ such that $H_{2i}(\lambda)\le\frac{1}{8}$ for $i=1,2$.
Note that letting $ M_ 2=\max\{ M_ {21}, M_ {22}\}$,
for $\lambda> M_ 2 $, we have $H_{2}(\lambda)\le \frac14$ as desired.

For any $ x\in U\setminus V^{(2)}$,   $x$ belongs to some $Q\in \mathscr  W\setminus \mathscr W_{\ez_0}^{(2)}$. Hence  $N(Q)\cap \mathscr W_{\ez_0}=\emptyset$ and $P^\ast=\Omega$ for all $P\in N(Q)$.
By $\sum_{P\in N(Q)}\vz_P(x)=\sum_{P\in \mathscr W}\vz_P(x)=1$, we have
$$Eu(x)=\sum_{P\in \mathscr W}\vz_P(x)u_{P^\ast}=\sum_{P\in N(Q)}\vz_P(x)u_{P^\ast}=u_\Omega.$$

To find $M_{22}$, using  Jensen's inequality, we obtain
 \begin{eqnarray*}
 H_{22}(\lambda)
 &&\leq  \int_{U \backslash V^{(2)}} \int_{\Omega}  \left[\fint _{\Omega} \phi\left(\frac{|u(z)-u(y)|}{\lambda}\right) \,dz \right] \frac{dydx}{|x-y|^{n+\beta}}\\
 &&\leq  \int_{\Omega} \left[\frac{|\diam \Omega|^{n+\beta}}{|\Omega|}  \int_{U \backslash V^{(2)}}\frac{dx}{|x-y|^{n+\beta}} \right] \left[\int _{\Omega} \phi\left(\frac{|u(z)-u(y)|}{\lambda}\right) \,\frac{dz}{|z-y|^{n+\beta}}\right] dy.
 \end{eqnarray*}
Clearly, for $y \in \Omega$, letting $Q\in \mathscr  W\setminus \mathscr W_{\ez_0}$ and $x \in Q$, one has
$$|x-y|\ge d(x,\Omega) \ge l_Q \ge \frac1{\ez_0}\diam\Omega.$$
Furthermore, since $\Omega$ is an Ahlfors $n-$regular domain, we have $|\Omega| \geq \theta |\diam \Omega|^{n}$ . This yields
 \begin{eqnarray*}
\frac{|\diam \Omega|^{n+\beta}}{|\Omega|}\int_{U \backslash V}\frac{dx}{|x-y|^{n+\beta}}
 &&\leq\frac1{\theta} |\diam \Omega|^{\beta} \int_{|x-y|> \frac{1}{\epsilon_0}  \diam \Omega}\frac{dx}{|x-y|^{n+\beta}} \\
 &&\leq \frac{n}{\theta}|\diam \Omega|^{\beta} \omega_n  \int^{\infty}_{\frac{1}{\epsilon_0} \diam \Omega} \frac{1}{r^{\beta+1}} dr
 \leq \omega_n\frac{n \epsilon_0^{\beta}}{\theta\beta}.
 \end{eqnarray*}
We then conclude
\begin{eqnarray*}
H_{22}(\lambda)&& \leq \omega_n \frac{n \epsilon_0^{\beta}}{\theta\beta}\int_{\Omega} \int_{\Omega}  \phi\left(\frac{|u(z)-u(y)|}{\lambda}\right) \frac{dzdy}{|y-z|^{n+\beta}}.
\end{eqnarray*}
Letting  $M_{22}=8 \omega_n \frac{n \epsilon_0^{\beta}}{\theta\beta}$,
 by the convexity of $\phi$,  for $\lambda >  M_{22} $ we have $H_{22}(\lambda)\le \frac{1}{8}$.

To find $M_{21}$, note that for $x\in V^{(2)}$,
 $$\sum \limits _{Q \in\mathscr  W } \varphi_Q (x)= \sum \limits _{Q \in\mathscr  W_{\ez_0}^{(2)} } \varphi_Q (x) =1.$$
By the same  argument as $H_2(\lambda)$ in the case $\diam\Omega=\fz$, one has
 \begin{eqnarray*}
H_{21}(\lambda)
&& \leq 2\gamma_1 4^n\gamma_0(21 n)^{n+\beta}  \sum\limits_{Q \in\mathscr  W^{(2)}_{\ez_0}} \int _{Q^\ast}  \int_{\Omega}\phi\left(\frac{|u(z)-u(y)|}{\lambda}\right)   \frac{dzdy}{|y-z|^{n+\beta}} .
\end{eqnarray*}
Moreover,
 $$ \sum\limits_{Q \in\mathscr  W^{(2)}_{\ez_0}}\chi_{Q^\ast}   \le \gamma_2+(\ez_0+64\sqrt{n})^n,$$
and hence,
 \begin{eqnarray*}
H_{21}(\lambda)
&& \leq 2\gamma_1 4^n\gamma_0  [\gamma_2+(\ez_0+64\sqrt{n})^n] (21 n)^{n+\beta}  \int _\Omega  \int_{\Omega}\phi\left(\frac{|u(z)-u(y)|}{\lambda}\right)  \frac{dzdy}{|y-z|^{n+\beta}}.
\end{eqnarray*}
Letting $ M_{21} =16\gamma_1 4^n\gamma_0  [\gamma_2+(\ez_0+64\sqrt{n})^n] (21 n)^{n+\beta}$, by the convexity of $\phi$ again, if $\lambda>  M_{21} $, we have $H_{21}(\lambda)\le \frac{1}{8}$ as desired.

 To find $M_3$, note that
\begin{eqnarray*}U\times U
&\subset [V^{(3)}\times V^{(3)}]\cup  [V^{(2)}\times ( U \backslash  V^{(3)})] \cup  [ ( U \backslash  V^{(3)})\times V^{(2)} ] \cup [(U \backslash  V^{(2)} )\times (U \backslash  V^{(2)})] .
\end{eqnarray*}
Write
\begin{eqnarray*}
H_{3}(\lambda)&&=  \int_{V^{(3)}} \int_{ { V ^{(3)}}}   \phi \left(\frac{|Eu(x)-Eu(y)|}{\lambda}\right) \, \frac{dydx}{|x-y|^{n+\beta}} \\
&&+2  \int_{V^{(2)} } \int_{(U \backslash  V^{(3)}) }   \phi \left(\frac{|Eu(x)-Eu(y)|}{\lambda}\right) \, \frac{dydx}{|x-y|^{n+\beta}}
+  \int_{U\setminus V^{(2)}} \int_{U\setminus V^{(2)}}   \phi \left(\frac{|Eu(x)-Eu(y)|}{\lambda}\right) \, \frac{dydx}{|x-y|^{n+\beta}}\\
&&=: H_{31}(\lambda)+H_{32}(\lambda)+H_{33}(\lambda).
\end{eqnarray*}

Observe that
$Eu(x)=Eu(y)=u_\Omega$ for $x,y\in U\setminus V$, this implies $H_{33}(\lambda)=0$. It only suffices to find $M_{3i}$ such that $H_{3i}(\lambda)\leq \frac{1}{8}$ for all $\lambda >M_{3i} $ $(i=1,2)$.

For $H_{31}(\lambda)$, similarly to $H_{3}(\lambda)$ in the case $\diam\Omega=\fz$,  taking $ M_{31}$ as $M_3$ with $\gamma_2$ replaced by $\gamma_2+(\ez_0+4^5\sqrt{n})^n\omega_n$,
we can prove that  if $\lambda \ge M_{31}$, then $H_{31}(\lambda)\le \frac{1}{8}$. Here we omit the details.

For $H_{32}(\lambda)$,  note that for $y\in U \backslash   V^{(3)}$,  by $Eu(y)=u_\Omega$, we have
 $$H_{32}(\lambda)=  \int_{V^{(2)}} \int_{ U \backslash   V^{(3)}  }   \phi \left(\frac{|Eu(x)-u_\Omega|}{\lambda}\right) \, \frac{dydx}{|x-y|^{n+\beta}}.$$
 By  Jessen's inequality, one has
 $$H_{32}(\lambda)\le   \int_{V^{(2)}} \int_{ U \backslash   V^{(3)}  } \, \frac{dy }{|x-y|^{n+\beta}}\fint_\Omega \phi \left(\frac{|Eu(x)-u(z)}{\lambda}\right) \,   dx\,dz  .$$
For any $x\in V^{(2)}$ and $y\in U \backslash   V^{(3)}$, if $Q\in \mathscr W_{\ez_0}^{(3)}\setminus \mathscr W_{\ez_0}^{(2)}$ and $y\in Q$, then
 $|x-y|\ge l(Q)\ge \frac1 {\ez_0}\diam \Omega$.
 Thus
 $$\int_{ U \backslash   V^{(3)}  } \, \frac{dy }{|x-y|^{n+\beta}}\le\frac{ n \ez_0^\beta \omega_n}{\beta} (\diam \Omega)^{-\beta}.$$
By $|\Omega|\ge \theta\diam\Omega$,  one   has
  $$H_{32}(\lambda)\le  \frac{n \ez_0^{ \beta}\omega_n}{\theta \beta} (\diam \Omega)^{-(\beta+n)}    \int_{V^{(2)}}  \int_\Omega \phi \left(\frac{|Eu(x)-u(z)|}{\lambda}\right) \,   dx\,dz .$$
For any $x\in V^{(2)}$ , there exists a $P_i\in \mathscr W^{(i)}_{\ez_0}$ such that
  $x\in P_2$ and $P_i\in N(P_{i-1})$ for $i=1,2$.
Together with  $l(P_0)\le \frac1{\ez_0}\diam\Omega$ and Lemma \ref{wy}, we know
  $l(P_2)\le 4^2 \frac1{\ez_0}\diam\Omega$.
Hence for $y\in \Omega$,
  $$|x-y|\le \dist(x,\Omega)+\diam\Omega\le  \diam P_2+\dist(P_2,\Omega)+ \diam\Omega
  \le  4^4 \frac1{\ez_0}\sqrt n \diam\Omega.$$
This yields
$$H_{32}(\lambda)\le  \frac{n \ez_0^{ \beta} \omega_n}{\theta\beta}  (4^4 \frac1{\ez_0}\sqrt n)^{n+\beta}    \int_{V^{(2)}}  \int_\Omega \phi \left(\frac{|Eu(x)-u(z)|}{\lambda}\right) \,   \frac{dx\,dz}{|x-z|^{n+\beta}}\le
     \frac{ 4^ {4(n+\beta)} n ^{  \frac{n+\beta}{2}+1} \omega_n}{\theta\beta \ez_0^{\beta}}   H_{21}(\lambda).$$
Letting $M_{32} = \frac{ 4^ {4(n+\beta)+2} n ^{  \frac{n+\beta}{2}+1} \omega_n}{\theta\beta \ez_0^{\beta}}   M_ {21}$, if $\lambda > M_{32}$, we have $H_{32}(\lambda)\le \frac{1}{8}$ as desired.
Then completes the proof of Theorem \ref{t1.1}.

\section{Proof of (iii)$\Rightarrow$(i) of Theorem $\ref{t1.2}$}\label{s5}

To prove  (iii)$\Rightarrow$(i) of Theorem $\ref{t1.2}$, we need the following estimates for test functions.
Below we write $B_{\Omega}(x,r)= B (x,r) \cap \Omega$.
For $x\in \Omega$ and $0<r<t<\diam\Omega$, set
$$
u_{x,r,t}(z)=\left\{\begin{array}{ll}1& z\in B_\Omega(x,r)\\
\frac{t-|x-z|}{t-r}\quad & z\in B_\Omega(x,t)\setminus B_\Omega(x,r)\\
0& z\in\Omega\setminus B_\Omega(x,t)
\end{array}\r.
$$
Then proof of Lemma \ref{l3.3} is similar to  Lemma 5.1 of \cite{lz18}. For reader's convenience, we give the details.

\begin{lem}\label{l3.3}
Let $\beta >0$ and $\phi$ be a Young function satisfying \eqref{delta0}. For all $\beta>0$, $x\in \Omega$ and $0<r<t<\diam\Omega$,  we have $u_{x,r,t}\in \dot{ W }^{\beta,\phi}(\Omega)$ with
$$\|u_{x,r,t}\|_{\dot{ W }^{\beta,\phi}(\Omega)}\le C\left [\phi^{-1}\left(\frac{(t-r)^\beta}{|B_\Omega(x,t)| }\right)\right]^{-1}.$$
\end{lem}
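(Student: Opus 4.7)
The plan is to estimate the double integral defining the Luxemburg norm directly, exploiting the fact that $u_{x,r,t}$ is a Lipschitz cutoff. I will first verify the pointwise bound
\begin{equation*}
|u_{x,r,t}(z)-u_{x,r,t}(w)|\le \min\Bigl\{1,\,\tfrac{|z-w|}{t-r}\Bigr\}\qquad\forall\,z,w\in\Omega.
\end{equation*}
On the set where both points lie in $B_\Omega(x,t)\setminus B_\Omega(x,r)$ this follows from the reverse triangle inequality. In the remaining cases (one point in $B_\Omega(x,r)$, or one point outside $B_\Omega(x,t)$) one checks that the radial gap controls $|u(z)-u(w)|$, using $|z-w|\ge |x-w|-|x-z|$.

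Next I fix $\lambda>0$ (to be chosen) and aim to show
\begin{equation*}
H(\lambda):=\int_\Omega\int_\Omega \phi\!\left(\frac{|u_{x,r,t}(z)-u_{x,r,t}(w)|}{\lambda}\right)\frac{dz\,dw}{|z-w|^{n+\beta}}\le 1.
\end{equation*}
Since $u_{x,r,t}$ vanishes off $B_\Omega(x,t)$, symmetry gives $H(\lambda)\le 2\int_{B_\Omega(x,t)}\int_\Omega(\cdots)\,dw\,dz$. For fixed $z\in B_\Omega(x,t)$ I split the $w$-integral into the regions $|z-w|\le t-r$ and $|z-w|>t-r$. On the first region I use the Lipschitz estimate, convert to polar coordinates centred at $z$, substitute $s=\lambda(t-r)\sigma$, and apply \eqref{delta0} to control $\int_0^{1/\lambda}\phi(\sigma)\sigma^{-\beta-1}d\sigma$ by $C_\beta\lambda^\beta\phi(1/\lambda)$. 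On the second region I use the bound $|u(z)-u(w)|\le 1$ and the elementary tail integral $\int_{t-r}^{\infty}s^{-\beta-1}ds=(t-r)^{-\beta}/\beta$. Both contributions collapse to the same form, yielding
\begin{equation*}
H(\lambda)\le C(n,\beta)\,|B_\Omega(x,t)|\,(t-r)^{-\beta}\,\phi\!\left(\tfrac{1}{\lambda}\right).
\end{equation*}

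Finally I select $\lambda$ so that the right-hand side is at most $1$. This amounts to asking
$\phi(1/\lambda)\le (t-r)^\beta/(C(n,\beta)|B_\Omega(x,t)|)$, equivalently
$1/\lambda\le \phi^{-1}\bigl((t-r)^\beta/(C(n,\beta)|B_\Omega(x,t)|)\bigr)$.
Using concavity of $\phi^{-1}$ and $\phi^{-1}(0)=0$ (Lemma \ref{l2.4}(ii)), which gives $\phi^{-1}(\mu y)\ge \mu\,\phi^{-1}(y)$ for $0<\mu\le 1$, I may take
\begin{equation*}
\lambda=\frac{C(n,\beta)}{\phi^{-1}\!\left(\frac{(t-r)^\beta}{|B_\Omega(x,t)|}\right)},
\end{equation*}
which produces the claimed bound on $\|u_{x,r,t}\|_{\dot W^{\beta,\phi}(\Omega)}$.

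The main technical step is the careful application of \eqref{delta0} in the near-diagonal piece: one needs the integrand $\phi(s/(\lambda(t-r)))s^{-\beta-1}$ to produce exactly the factor $\phi(1/\lambda)$ after the substitution, so that $\lambda$ enters only through $\phi(1/\lambda)$ and the geometric factor $(t-r)^{-\beta}|B_\Omega(x,t)|$ can be inverted through $\phi^{-1}$. The rest is routine bookkeeping and the elementary concavity inequality for $\phi^{-1}$.
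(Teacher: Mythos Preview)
Your proof is correct and follows essentially the same route as the paper's: split the double integral into a near-diagonal piece (controlled via the Lipschitz bound and condition \eqref{delta0}) and a far piece (controlled via the trivial bound $|u|\le 1$ and the tail integral), then invert through $\phi^{-1}$ using its concavity.

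Your packaging is in fact slightly more streamlined than the paper's. The paper first splits according to whether $z,w$ lie in $B_\Omega(x,t)$ or its complement, obtaining pieces $H_1$ and $H_2$; in the $H_2$ piece it then has to estimate $\sup_{s\in(0,1]}\phi(s/\lambda)s^{-\beta}$ by a separate dyadic argument before applying \eqref{delta0}. By establishing the uniform pointwise bound $|u_{x,r,t}(z)-u_{x,r,t}(w)|\le\min\{1,|z-w|/(t-r)\}$ at the outset and splitting purely by the distance $|z-w|$, you avoid that extra step entirely. The gain is cosmetic---the constants and the dependence on $C_\beta$ are the same---but your argument is a little cleaner.
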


\begin{proof}
Write
\begin{align*}
\int_\Omega\int_\Omega\phi\left(\frac{|u_{x,r,t}(z)-u_{x,r,t}(w)|}\lambda\right)\frac{dzdw}{|z-w|^{n+\beta}}
&=\int_{B_\Omega(x,t)}\int_{B_\Omega(x,t)}\phi\left(\frac{|u_{x,r,t}(z)-u_{x,r,t}(w)|}\lambda\right)
\frac{dzdw}{|z-w|^{n+\beta}}\\
&\quad+\int_{\Omega\setminus B_\Omega(x,t)}\int_{B_\Omega(x,t)}\phi\left(\frac{|  u_{x,r,t}(z)|}\lambda\right)\frac{dzdw}{|z-w|^{n+\beta}}\\
&=H_1(\lambda)+H_2(\lambda).
\end{align*}
Clearly,
\begin{align*}H_2(\lambda)&\le \int_{B_\Omega(x,t) \setminus B_\Omega(x,r)}\phi\left(\frac{t-|z-x|}{\lambda(t-r)}\right)\int_{\Omega\setminus B_\Omega(x,t)}\frac{dw}{|z-w|^{n+\beta}}dz + \int_{B_\Omega(x,r) }
\int_{\Omega\setminus B_\Omega(x,t)}\phi\left(\frac{1}{\lambda }\right)\frac{dw}{|z-w|^{n+\beta}}dz .
\end{align*}
Since $ \Omega\setminus B_\Omega(x,t)\subset \Omega\setminus B_\Omega(z, t-|z-x|)$, then
$$\int_{\Omega\setminus B_\Omega(x,t)}\frac{dw}{|z-w|^{n+\beta}}\le \int_{\rn\setminus B_\Omega(z, t-|z-x|)}\frac{dw}{|z-w|^{n+\beta}}\le \frac{n}{\beta} \omega_n  (t-|z-x|)^{-\beta}.$$
This induces
\begin{align*}H_2(\lambda)&\le \int_{B_\Omega(x,t) \setminus B_\Omega(x,r)}\phi\left(\frac{t-|z-x|}{\lambda(t-r)}\right) \frac{n}{\beta}\omega_n  (t-|z-x|)^{-\beta}dz + \int_{B_\Omega(x,r) }
 \phi\left(\frac{1}{\lambda }\right) \frac{n}{\beta} \omega_n  (t-|z-x|)^{-\beta}dz \\
&\le   \frac{n}{\beta} \omega_n(\beta+1)\frac{|B_\Omega(x,t)|}{(t-r)^\beta}\left[
\sup_{s\in(0,1]}\phi\left(\frac{s}{\lambda }\right)\frac1{s^{ \beta}}  +\phi\left(\frac{1 }{\lambda }\right)\right].
\end{align*}
Moreover,
$$
\sup_{s\in(2^{-j-1},2^{-j}]}\phi\left(\frac{s}{\lambda }\right)\frac1{s^{ \beta}}\le  \frac{2^\beta}{\beta} \int_{2^{-j}}^{2^{-j+1}} \phi\left(\frac{s}{\lambda }\right)\frac{ds}{s^{ \beta+1}} , $$
which leads to
$$\sup_{s\in(0,1]}\phi\left(\frac{s}{\lambda}\right)\frac1{s^{ \beta}}\le \frac{2^\beta}{\beta} \int_0^2  \phi\left(\frac{s}{\lambda }\right)\frac{ds}{s^{ \beta+1}}\le \frac{4^\beta }{\beta}\int_0^1  \phi\left(\frac{2 s}{\lambda}\right)\frac{ds}{s^{ \beta +1}}\le \frac{4^\beta}{\beta} \phi\left(\frac{2}{\lambda }\right).$$
Therefore,
\begin{align*}
H_2
&\le  (\frac{4^\beta}{\beta}+1)\frac{n}{\beta} \omega_n(\beta+1)\frac{|B_\Omega(x,t)|}{(t-r)^\beta} \phi\left(\frac{2 }{\lambda } \right).
\end{align*}
If $\lambda =M[\phi^{-1}\left(\frac{(t-r)^\beta}{|B_\Omega(x,t)| }\right)]^{-1}$ and $M\ge 4  (\frac{4^\beta}{\beta}+1) \frac{n}{\beta} \omega_n(\beta+1)$, we have $H_2(\lambda)\le  \frac{1}{2} $.

Moreover,
\begin{align*}H_1(\lambda)&\le \int_{B_\Omega(x,t)}\int_{B_\Omega(w,t-r)}\phi\left(\frac{|z-w|}{\lambda(t-r)}\right)\frac{dz}{|z-w|^{n+\beta}}dw
+\int_{B_\Omega(x,t)}\int_{B_\Omega(w,2t)\setminus B_\Omega(w,t-r)}\phi\left(\frac{ 1}{\lambda }\right)\frac{dz}{|z-w|^{n+\beta}}dw.
\end{align*}
Observe that
\begin{align*} \int_{B_\Omega(w,t-r)}\phi\left(\frac{|z-w|}{\lambda(t-r)}\right)\frac{dz}{|z-w|^{n+\beta}}
&\le n\omega_n \int_0^{t-r}  \phi\left(\frac{s}{\lambda(t-r)}\right)\frac{ds}{s^{\beta+1}} \\
&\le n\omega_n (t-r)^{-\beta}\int_0^{1}  \phi\left(\frac{s}{\lambda}\right)\frac{ds}{s^{\beta+1}}.
\end{align*}
Applying the condition \eqref{delta0}, we get
\begin{align*} \int_{B_\Omega(w,t-r)}\phi\left(\frac{|z-w|}{\lambda(t-r)}\right)\frac{dz}{|z-w|^{n+\beta}}
&\le n\omega_n  ( t-r)^{-\beta}  \phi\left(\frac{ 1  }{\lambda }\right) .
\end{align*}
On the other hand,
\begin{align*} \int_{B_\Omega(w,2t)\setminus B_\Omega(w,t-r)}  \frac{dz}{|z-w|^{n+\beta}}  &\le \int_{\rn \setminus B_ (w,t-r)}  \frac{dz}{|z-w|^{n+\beta}} =  \frac{n}{\beta}\omega_n (t-r)^{-\beta}.
\end{align*}
Hence
$$H_1(\lambda)\le \frac{n}{\beta}\omega_n C_{\beta}(\beta+1)\frac{|B_\Omega(x,t)|}{(t-r)^\beta}\phi\left(\frac{1 }{\lambda }\right).$$
If $\lambda=M[\phi^{-1}\left(\frac{(t-r)^\beta}{|B_\Omega(x,t)| }\right)]^{-1}$ and $M\ge 2\frac{n}{\beta}(\beta+1)\omega_n C_{\beta}$, we have $H_1(\lambda)\le \frac{1}{2} $.

\end{proof}

We are ready to prove (iii)$\Rightarrow$(i) of Theorem \ref{t1.2}.

\begin{proof} [Proof of (iii)$\Rightarrow$(i) of Theorem \ref{t1.2}]
Below we consider two cases:  $\beta> n$ and $0<\bz<n$.

{ \it Case   $\beta> n$. }
Let $\Omega $ be a ${\dot W }^{\beta,\phi}-$ imbedding domain.
For any continuous function $u\in {\dot W }^{\beta,\phi}(\Omega)$, we have
\begin{align}\label{e4.yy1}
|u(x)- u(y)| \leq  C\phi^{-1}\left(|x-y|^{\beta-n}\right)\|u\|_{\dot{ W }^{\beta,\phi}(\Omega)} \quad \mbox{for almost all $x,y \in \Omega$}.
\end{align}
Given any $x\in \Omega$ and $0<r<t<\diam\Omega$,
let $u=u_{x,r,t}$ be as in as Lemma \ref{l3.3}.
Applying Lemma \ref{l3.3}, we have
\begin{eqnarray*}
|u(x)- u(y)| \leq C \phi^{-1}\left( t^{\beta-n}\right) \left[\phi^{-1}\left(\frac{(t-r)^\beta}{|B_\Omega(x,t)| }\right)\right]^{-1}.
\end{eqnarray*}
Without loss of  generality, we may assume $C \geq 1$.

On the other hand, let $r=t/2$ .
For $y \in B_{\Omega}(x,\, t+t/2) \backslash B_{\Omega}(x,\, t)$, we have  $|u(x)-u(y)|=1$.
Thus
\begin{eqnarray*}
\phi^{-1}\left(\frac{(t/2)^\beta}{|B_\Omega(x,t)| }\right)\leq C \phi^{-1}\left( t^{\beta-n}\right).
\end{eqnarray*}
By the doubling property of $\phi$,
we have
\begin{eqnarray*}
\frac{(t/2)^\beta}{|B_\Omega(x,t)| } =\phi\left[\phi^{-1}\left(\frac{(t/2)^\beta}{|B_\Omega(x,t)| }\right)\right]
&&\leq \phi \left[C\phi^{-1}\left( t/2^{\beta-n}\right) \right]
\leq C^{K}\phi \left[\phi^{-1}\left( t^{\beta-n}\right) \right]
 \leq C^{K}t^{\beta-n},
\end{eqnarray*}
namely,
$t^{n} \leq  2^{\beta}C^{K}|B_\Omega(x,t)|$ as desired.

{\it Case   $0<\beta < n $.}
Given any $0<t<\diam\Omega$, let $b_0=1$ and $b_j \in (0,1)$ for $j\in N$ such that
\begin{equation}\label{e5.w1}
|B(x, b_jt) \cap \Omega|= 2^{-1}|B(x, b_{j-1}t) \cap \Omega|= 2^{-j}|B(x, t) \cap \Omega|.
\end{equation}

For each $j\ge 1$, let $u_{x,b_{j+1}t, b_jt} $  as Lemma \ref{l3.3}. Note that $u_{x,b_{j+1}t, b_jt}-c \geq \frac{1}{2}$ either in $B_\Omega(x,b_{j+1}t)$ or in $\Omega \setminus B_\Omega(x,b_{j}t)$. For $j \geq 1$, it implies $ B_\Omega(x,b_{j-1}t) \setminus B_\Omega(x,b_{j}t) \subset \Omega \setminus B_\Omega(x,b_{j}t)$ and
$$
|\Omega \setminus B_\Omega(x,b_{j}t)| \geq | B_\Omega(x,b_{j-1}t) \setminus B_\Omega(x,b_{j}t)|= |B_\Omega(x,b_{j}t)|=2|B_\Omega(x,b_{j+1}t)|.
$$
Hence
\begin{align*}
\int_{\Omega} \phi^{n/(n-\beta)}\left(\frac{|u_{x,b_{j+1}t, b_jt}(z)-c|}{\lambda}\right)\, dz
\geq \int_{B_{\Omega}(x,b_{j+1}t)} \phi^{n/(n-\beta)}\left(\frac{1}{2\lambda}\right)\, dz
\geq |B_{\Omega}(x,b_{j+1}t)| \phi^{n/(n-\beta)}\left(\frac{1}{2\lambda}\right),
\end{align*}
that is, for $j \geq 1$,
\begin{align*}
\inf_{c\in\rr}\|u_{x,b_{j+1}t, b_jt}-c\|_{L^{\phi^{n/(n-\beta)}}(\Omega)} \geq 2\left[\phi^{-1}\left(\frac{1}{|B_\Omega(x,b_{j+1}t)|^{1-\beta/n} }\right)\right]^{-1}.
\end{align*}
On the other hand, by  \eqref{e4.yy1} and Lemma \ref{l3.3} one has
$$
\inf_{c\in\rr}\|u_{x,b_{j+1}t, b_jt}-c\|_{L^{\phi^{n/(n-\beta)}}(\Omega)}\leq C \|u_{x,b_{j+1}t, b_jt}\|_{\dot{ W }^{\beta,\phi}(\Omega)} \le C\left[\phi^{-1}\left(\frac{(b_{j}t-b_{j+1}t)^\beta}{|B_\Omega(x,b_{j}t)| }\right)\right]^{-1}.
$$
Thus  we conclude that
\begin{eqnarray*}
\phi^{-1}\left(\frac{(b_{j}t-b_{j+1}t)^\beta}{|B_\Omega(x,b_{j}t)| }\right)
\leq C\phi^{-1}\left(\frac{1}{|B_\Omega(x,b_{j+1}t)|^{1-\beta/n} }\right).
\end{eqnarray*}
Without loss of  generality, we may assume $C\geq 1$.
Applying  Lemma \ref{l2.4}, we know
\begin{eqnarray*}
\frac{(b_{j}t-b_{j+1}t)^\beta}{|B_\Omega(x,b_{j}t)| }&&=\phi\left[\phi^{-1}\left(\frac{(b_{j}t-b_{j+1}t)^\beta}{|B_\Omega(x,b_{j}t)| }\right)\right]
\leq \phi \left[C\phi^{-1}\left(\frac{1}{|B_\Omega(x,b_{j+1}t)|^{1-\beta/n} }\right)\right] \\
&&\leq   C  ^{K-1}
\phi \left[\phi^{-1}\left(\frac{1}{|B_\Omega(x,b_{j+1}t)|^{1-\beta/n}}\right)\right]
=C ^{K-1}\frac{1 }{|B_\Omega(x,b_{j+1}t)|^{1-\beta/n} }.
\end{eqnarray*}
Therefore,
\begin{align*}
(b_{j}t-b_{j+1}t)^{\beta}
&\leq C^{K-1} 2^{1-\beta/n}|B_\Omega(x,b_{j}t)|^{\beta/n}
 \leq C^{K-1} 2^{1-\frac{\beta}{n}(j+1)}|B_\Omega(x,t)|^{\beta/n}.
\end{align*}
Since $b_j \rightarrow 0$ as $ j \rightarrow \infty$, we have
\begin{eqnarray*}
b_1t = \sum \limits _{j\geq 1} (b_{j}t- b_{j+1} t)
\leq \sum \limits _{j \geq 1} C^{K-1} 2^{1-\frac{\beta}{n}(j+1)}|B_\Omega(x,t)|^{\beta/n}
 \lesssim |B_\Omega(x,t)|^{1/n}.
\end{eqnarray*}

Applying an argument similar to in \cite{hkt08}, both $b_1 \geq 1/10$ and  $b_1 \leq 1/10 $ satisfy $\left|B_{\Omega}(x,t)\right| \ge C t^n$ as desired.
\end{proof}

\medskip
 \noindent {\bf Acknowledgment}.
 The author would like thank the anonymous referee for several helpful suggestions and comments.
 The author  would like to thank  Professor Yuan Zhou for several valuable discussions of this paper.
The author is partially supported by National Natural Science Foundation of China (No. 11871088).

\medskip



\end{document}